\newtheorem{theorem}{Theorem}
\newtheorem{lemma}[theorem]{Lemma}
\newtheorem{proposition}[theorem]{Proposition}
\newtheorem{claim1}{Claim}
\newtheorem{corollary}[theorem]{Corollary}
\newtheorem{claim2}{Claim}
\newtheorem{claim3}{Claim}
\newtheorem{claim4}{Claim}
\newtheorem{claim5}{Claim}
\newtheorem{case1}{Case}
\theoremstyle{definition}
\theoremstyle{remark}
\newcommand{\cF}{\mathcal{F}}
\newcommand{\cG}{\mathcal{G}}
\begin{document}
\newcommand{\Addresses}{
\bigskip
\footnotesize

\medskip

\noindent Maria-Romina~Ivan, \textsc{Department of Pure Mathematics and Mathematical Statistics, Centre for Mathematical Sciences, Wilberforce Road, Cambridge, CB3 0WB, UK,} and\\\textsc{Department of Mathematics, Stanford University, 450 Jane Stanford Way, CA 94304, USA.}\par\noindent\nopagebreak\textit{Email addresses: }\texttt{mri25@dpmms.cam.ac.uk, m.r.ivan@stanford.edu}

\medskip

\noindent Sean~Jaffe, \textsc{Department of Pure Mathematics and Mathematical Statistics, Centre for Mathematical Sciences, Wilberforce Road, Cambridge, CB3 0WB, UK}\par\noindent\nopagebreak\textit{Email address: }\texttt{scj47@cam.ac.uk}}

\pagestyle{fancy}
\fancyhf{}
\fancyhead [LE, RO] {\thepage}
\fancyhead [CE] {MARIA-ROMINA IVAN AND SEAN JAFFE}
\fancyhead [CO] {THE SATURATION NUMBER FOR THE DIAMOND IS LINEAR}
\renewcommand{\headrulewidth}{0pt}
\renewcommand{\l}{\rule{6em}{1pt}\ }
\title{\Large{\textbf{THE SATURATION NUMBER FOR THE DIAMOND IS LINEAR}}}
\author{MARIA-ROMINA IVAN AND SEAN JAFFE}
\date{ }
\maketitle
\begin{abstract}
For a fixed poset $\mathcal P$ we say that a family $\mathcal F\subseteq\mathcal P([n])$ is $\mathcal P$-saturated if it does not contain an induced copy of $\mathcal P$, but whenever we add a new set to $\mathcal F$, we form an induced copy of $\mathcal P$. The size of the smallest such family is denoted by $\text{sat}^*(n, \mathcal P)$.\par For the diamond poset $\mathcal D_2$ (the two-dimensional Boolean lattice), while it is easy to see that the saturation number is at most $n+1$, the best known lower bound has stayed at $O(\sqrt n)$ since the introduction of the area of poset saturation. In this paper we prove that $\text{sat}^*(n, \mathcal D_2)\geq \frac{n+1}{5}$, establishing that the saturation number for the diamond is linear. The proof uses a result about certain pairs of set systems.
\end{abstract}
\section{Introduction}
We say that a poset $(\mathcal Q, \leq')$ contains an
\textit{induced copy} of a poset $(\mathcal P, \leq)$ if there exists an injective function $f:\mathcal P\rightarrow\mathcal Q$ such that $(\mathcal P, \leq)$ and $(f(\mathcal P), \leq')$ are isomorphic. For a fixed poset $\mathcal P$ we call a family $\mathcal F$ of subsets of $[n]$ $\mathcal P$-\textit{saturated} if $\mathcal F$ does not contain an induced copy of $\mathcal P$, but for every subset $S$ of $[n]$ such that $S\notin\mathcal F$, the family $\mathcal F\cup \{S\}$ does contain such a copy. We denote by $\text{sat}^*(n, \mathcal P)$ the size of the smallest $\mathcal P$-saturated family of subsets of $[n]$. In general we refer to $\text{sat}^*(n, \mathcal P)$ as the \textit{induced saturation number} of $\mathcal P$.

Saturation for posets was introduced by Gerbner, Keszegh, Lemons, Palmer, P{\'a}lv{\"o}lgyi and Patk{\'o}s \cite{gerbner2013saturating}, although this was not for \textit{induced} saturation. We refer the reader to the textbook of Gerbner and Patk{\'o}s \cite{gerbner2018extremal} for a nice introduction to the area. Despite its similarity with saturation for graphs, as well as its simple formulation, poset saturation is intrinsically different, and seems very difficult to analyse -- this is partially due to the rigidity of \textit{induced} copies of posets. One of the most important conjectures in the area is that for every poset the saturation number is either bounded or linear \cite{keszegh2021induced}. Indeed, linearity has been proven for just a few special posets. Most notably, the following posets have linear saturation number: $\mathcal V$ (one minimal element below two incomparable) and $\Lambda$ (one maximal element above two incomparable) \cite{ferrara2017saturation}, the butterfly (two maximal elements completely above two minimal elements) \cite{ivan2020saturationbutterflyposet, keszegh2021induced}, and the antichain \cite{bastide2024exact}. 

For the general setting, it was first shown by Keszegh, Lemons, Martin, P\'alv\"olgy and Patk\'os \cite{keszegh2021induced} that the saturation number is either bounded or at least $\log_2(n)$. This was later improved by Freschi, Piga, Sharifzadeh and Treglown \cite{freschi2023induced} who showed that $\text{sat}^*(n,\mathcal P)$ is either bounded or at least $2\sqrt n$. In the other direction, Bastide, Groenland, Ivan and Johnston \cite{polynomial} showed that the saturation number of any poset grows at most polynomially. Moreover, very recently Ivan and Jaffe \cite{gluing} showed that for any poset, one can add at most 3 points to obtain a poset with saturation number at most linear. 

The diamond poset, which we denote by $\mathcal D_2$, is the 4 point poset with one minimal element, one maximal element and two incomparable elements as shown in the picture below. Equivalently, $\mathcal D_2$ is the two-dimensional Boolean lattice.
\begin{center}
\begin{figure}[h]
\centering
\hspace{0.1cm}\\
\begin{tikzpicture}
\node (top) at (4,1) {$\bullet$}; 
\node (left) at (3,0) {$\bullet$};
\node (right) at (5,0) {$\bullet$};
\node (bottom) at (4,-1) {$\bullet$};
\draw (top) -- (left) -- (bottom) -- (right) -- (top);
\end{tikzpicture}
\end{figure}
The Hasse diagram of the diamond poset $\mathcal D_2$
\end{center}
Despite the simplicity of the diamond poset, the question of its induced saturation number has eluded all techniques successfully used in poset saturation so far. It is easy to see that $\text{sat}^*(n,\mathcal D_2)\leq n+1$ -- indeed, one can check that a maximal chain is diamond-saturated. For the lower bound, the first non-trivial one was established by Ferrara, Kay, Kramer, Martin, Reiniger, Smith and Sullivan \cite{ferrara2017saturation} who showed that $\text{sat}^*(n,\mathcal D_2)\geq \log_2(n)$. They also conjectured that $\text{sat}^*(n, \mathcal D_2)=\Theta(n)$. Next, Martin, Smith and Walker \cite{martin2020improved} showed that $\text{sat}^*(n,\mathcal D_2)\geq\sqrt n$. We mention that these bounds were not specific to the diamond structure -- they were in fact a consequence of the fact that the diamond is part of a larger class of special posets, namely posets that have the \textit{unique cover twin property} (UCTP). By a more in depth analysis of diamond-saturated families, the most recent lower bound was pushed to $\text{sat}^*(n, \mathcal D_2)\geq(4-o(1))\sqrt n$ by Ivan \cite{ivan2021minimal}.

In this paper we show that $\text{sat}^*(n,\mathcal D_2)\geq\frac{n+1}{5}$, and therefore:
\begin{theorem}
$\text{sat}^*(n,\mathcal D_2)=\Theta(n)$.\end{theorem} Our proof builds on the work of \cite{ivan2021minimal}, which is essentially Section 2. Inspired by the set systems analysed in Section 2, we define and analyse in Section 3 a certain family of pairs of set systems. This is the heart of the paper. Indeed, having completed Section 3, Section 4 is
just a short step to deduce from these results the desired linear lower bound for any diamond-saturated family.

Throughout the paper our notation is standard. For a finite set $X$ we denote by $\mathcal P(X)$ the power set of $X$, and for a positive integer $n$ we denote by $[n]$ the set $\{1,2,\dots,n\}$. Also, $C_2$ is the chain poset of size 2, i.e. two distinct comparable elements. 
\section{Preliminary lemmas}
We start the section with a helpful lemma proved in \cite{ferrara2017saturation}.
\begin{lemma}[Lemma 5 in \cite{ferrara2017saturation}]
Let $\mathcal F\subseteq\mathcal P([n])$ be a diamond-saturated family. If $\emptyset\in\mathcal F$, or $[n]\in\mathcal F$, then $|\mathcal F|\geq n+1$.
\end{lemma}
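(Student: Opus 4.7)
First, I reduce to the case $\emptyset \in \mathcal F$: since $\mathcal D_2$ is self-dual, complementation $S \mapsto [n]\setminus S$ sends any diamond-saturated family to another, transforming $[n] \in \mathcal F$ into $\emptyset \in \mathcal F$. The key structural fact I would establish is that for every $C \in \mathcal F$ the downset $\{A \in \mathcal F : A \subseteq C\}$ is a chain: if two elements $A, B$ in it were incomparable, then $\emptyset, A, B, C$ would form an induced copy of $\mathcal D_2$ inside $\mathcal F$, contradicting diamond-freeness. As a direct consequence, for every $i \in [n]$, $\mathcal F$ must contain some set with $i$ in it — otherwise adding $\{i\}$ would not create any diamond (since $\{i\}$ cannot play the role of a middle or bottom vertex without some strict $\mathcal F$-superset containing $i$, and cannot be the top since it has only $\emptyset$ as a proper subset), contradicting saturation.

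Next, I associate to each $i \in [n]$ a witness $D_i \in \mathcal F \setminus \{\emptyset\}$ with $i \in D_i$. If $\{i\} \in \mathcal F$, set $D_i = \{i\}$. Otherwise, saturation yields an induced diamond in $\mathcal F \cup \{\{i\}\}$, and checking each role for $\{i\}$ shows it must be at a middle position (not at the top, since $\{i\}$ has only $\emptyset$ as a proper subset; not at the bottom, since then $\emptyset$ together with the three $\mathcal F$-elements of the diamond would already form an induced $\mathcal D_2$ inside $\mathcal F$). This yields $C_i, D_i \in \mathcal F$ with $\emptyset \subsetneq C_i \subsetneq D_i$, $\{i\} \subsetneq D_i$, and $C_i$ incomparable to $\{i\}$ (so $i \notin C_i$); I take $D_i$ to be a minimum valid choice for the top of such a diamond.

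The crux of the proof is the injectivity of $i \mapsto D_i$. Assume for contradiction that $D_i = D_j = D$ for distinct $i, j$. If either $\{i\}$ or $\{j\}$ is in $\mathcal F$, then $D = \{i\} = \{j\}$, which is absurd. So $\{i\}, \{j\} \notin \mathcal F$, and there exist $C_i, C_j \in \mathcal F$, both non-empty, both strict subsets of $D$, with $i \notin C_i$ and $j \notin C_j$. Since $\mathcal F \cap 2^D$ is a chain, $C_i$ and $C_j$ are comparable; a case analysis on this comparison, combined with saturation applied to $\{i, j\} \notin \mathcal F$ (which would otherwise force an induced diamond of the form $\emptyset, \{i, j\}, C, D$ inside $\mathcal F$), produces an induced copy of $\mathcal D_2$ already present in $\mathcal F$ — a contradiction. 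Once the $D_i$'s are distinct, $\{\emptyset\} \cup \{D_i : i \in [n]\}$ consists of $n + 1$ distinct elements of $\mathcal F$, giving $|\mathcal F| \geq n + 1$. The main obstacle is making the collision argument airtight: since the $D_i$'s are not canonically determined, the selection rule (e.g.\ minima) and the case work on $C_i, C_j$ inside the chain $\mathcal F \cap 2^D$ need to be handled delicately.
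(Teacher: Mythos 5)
The paper does not actually prove this lemma; it is imported verbatim as Lemma~5 from \cite{ferrara2017saturation}. So there is no in-paper proof to compare against, and what matters is whether your argument stands on its own. The setup is sound: the self-duality reduction, the observation that $\{A \in \mathcal F : A \subseteq C\}$ is a chain for each $C \in \mathcal F$ (and hence that $\mathcal F$ is a tree rooted at $\emptyset$), the fact that every $i$ lies in some member of $\mathcal F$, and the derivation that a missing singleton $\{i\}$ forces a diamond $\emptyset, \{i\}, C_i, D_i$ with $C_i, D_i \in \mathcal F$, $i \notin C_i$, $C_i \neq \emptyset$ --- all of that is correct.

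The gap is exactly where you flag it: the injectivity of $i \mapsto D_i$. The argument you sketch does not close. Suppose $D_i = D_j = D$, and write the chain of $\mathcal F$-subsets of $D$ as $\emptyset = A_0 \subsetneq A_1 \subsetneq \dots \subsetneq A_{k} = D$. Your minimality of $D_i$ (and $D_j$) forces $i, j \notin A_\ell$ for $1 \leq \ell < k$, so each $A_\ell$ with $\ell < k$ is disjoint from $\{i,j\}$. You then note correctly that $\{i,j\} \notin \mathcal F$ (else $\emptyset, \{i,j\}, A_1, D$ is a diamond in $\mathcal F$), and apply saturation to $\{i,j\}$ to get a diamond $\emptyset, \{i,j\}, X, T$ with $X, T \in \mathcal F$. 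But this does \emph{not} produce a diamond inside $\mathcal F$: taking $T = D$ and $X = A_\ell$ for some $1 \leq \ell < k$ is a perfectly consistent instance (such $X$ is nonempty, disjoint from $\{i,j\}$, hence incomparable to it, and $X, T \in \mathcal F$), and it yields no contradiction with the minimality of $D$ nor with diamond-freeness. So the sentence ``a case analysis $\dots$ produces an induced copy of $\mathcal D_2$ already present in $\mathcal F$'' is not established; the case analysis has at least one case that is not a contradiction. Either a sharper choice of witness is needed, or an entirely different counting argument (e.g.\ exploiting the tree structure of $(\mathcal F, \subseteq)$ and the edge-labels $F \setminus p(F)$) must replace the injectivity claim. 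As written, the proof is incomplete at its central step.
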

Therefore, throughout the rest of the section we may assume that $\mathcal F$ is a diamond-saturated family with ground set $[n]$ such that $\emptyset,[n]\notin\mathcal F$.

Let $\mathcal B$ be the set of maximal elements of $\mathcal F$. Let also $\mathcal A_0$ to be the set of elements above a copy of $\mathcal V$ in $\mathcal F$. More precisely, $\mathcal A_0=\{X\in\mathcal P([n]):\exists Y,Z,W\in\mathcal F\text { such that }X,Y,Z,W\text{ form a diamond and }X\\\text{ is the maximal element of that diamond}\}$. Let $\mathcal A_1$ to be the set of minimal elements of $\mathcal A_0$. Finally, let $\mathcal A = \{X \in \mathcal A_1 :B\not\subseteq X, \forall B\in\mathcal B\}$.

Additionally, let $G(\mathcal A)$ be all the sets in $\mathcal F$ that generate $\mathcal A$. More precisely, for every $A\in\mathcal A$, let $G(A)=\{P\in\mathcal F:\exists R,Q\in\mathcal F\text{ such that } A,P,R,Q \text{ form a diamond with } A\text{ as its maximal element}\}$. Then $G(\mathcal A)=\cup_{A\in\mathcal A}G(A)$. Let also $W=\{i\in[n]:i\notin X\text{ for all }X\in G(\mathcal A)\}$. 

\begin{lemma}\label{niceproperty1}
We have that $\mathcal A$ and $\mathcal B$ are disjoint, and $\mathcal A \cup \mathcal B$ is a $C_2$-saturated family of $\mathcal P([n])$.
\end{lemma}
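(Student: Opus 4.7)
The plan is to verify the two parts of the statement separately: first that $\mathcal A$ and $\mathcal B$ are disjoint and together form an antichain, and then that this antichain is maximal, i.e.\ that any additional set is comparable to something in $\mathcal A \cup \mathcal B$. The whole argument will be a careful unpacking of the definitions, together with one ``push-up'' observation that uses diamond-freeness of $\mathcal F$.

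For disjointness, if $X \in \mathcal A \cap \mathcal B$ then $X$ would contain itself as a subset in $\mathcal B$, contradicting the definition of $\mathcal A$. To show $\mathcal A \cup \mathcal B$ is an antichain, I would check the three pair types. $\mathcal B$ is an antichain because it consists of maximal elements of $\mathcal F$. $\mathcal A \subseteq \mathcal A_1$ is an antichain because minimal elements of $\mathcal A_0$ are pairwise incomparable. The one non-trivial step is ruling out $A \subsetneq B$ for $A \in \mathcal A$, $B \in \mathcal B$: if $A \in \mathcal A_0$ is the top of a diamond $P \subsetneq Q, R \subsetneq A$ in $\mathcal F$ and $A \subsetneq B \in \mathcal F$, then I claim $\{P,Q,R,B\}$ is an induced diamond in $\mathcal F$. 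Indeed $Q, R$ stay incomparable, the strict containments $P \subsetneq Q,R \subsetneq B$ persist, and no new relation between $Q$ and $R$ or between $P$ and $B$ is created. This contradicts the diamond-freeness of $\mathcal F$. The other direction $B \subseteq A$ is forbidden directly by the definition of $\mathcal A$.

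For maximality, I fix $X \notin \mathcal A \cup \mathcal B$ and split into cases. If $X \in \mathcal F$ then $X \notin \mathcal B$ means $X$ is not maximal, so some $B \in \mathcal B$ satisfies $X \subsetneq B$. If $X \notin \mathcal F$ then diamond-saturation of $\mathcal F$ produces an induced diamond in $\mathcal F \cup \{X\}$ using $X$. Depending on the role of $X$ in that diamond: if $X$ is at the bottom or middle, then some element of $\mathcal F$ strictly contains $X$, and extending to a maximal element of $\mathcal F$ gives a $B \in \mathcal B$ with $X \subsetneq B$. If $X$ is at the top, then $X \in \mathcal A_0$; pick any $A \in \mathcal A_1$ with $A \subseteq X$ (so $A \in \mathcal A_0$ is minimal below $X$). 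If $A \in \mathcal A$ we are done with $A \subseteq X$; otherwise $A$ contains some $B \in \mathcal B$, whence $B \subseteq A \subseteq X$. In every case $X$ is comparable to a member of $\mathcal A \cup \mathcal B$.

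The only step that requires any thought is the push-up argument that $A \not\subsetneq B$ when $A \in \mathcal A_0$ and $B \in \mathcal F$, and even there the substitution is routine once one writes down the four poset relations required for the induced diamond. Everything else is definition-chasing plus the observation that every non-maximal element of $\mathcal F$ lies strictly below some element of $\mathcal B$, so I expect no serious obstacle.
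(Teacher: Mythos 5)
Your proposal is correct and follows essentially the same route as the paper: the push-up argument (replacing the top $A$ of a diamond in $\mathcal F \cup \{A\}$ by the larger $B \in \mathcal F$ to create a forbidden induced diamond inside $\mathcal F$) is exactly the paper's key step for $A \not\subseteq B$, and the maximality case analysis (comparable to some $B \in \mathcal B$ if $X$ is below, otherwise use an $A \in \mathcal A_1$ below $X$ and pass to a $B \in \mathcal B$ inside $A$ if $A \notin \mathcal A$) is also the paper's argument.
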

\begin{proof} We will first show that if $A\in \mathcal A$ and $B\in \mathcal B$, then $A\not \subseteq B$. This immediately implies that $\mathcal A$ and $\mathcal B$ are disjoint, since if $Y\in \mathcal A\cap \mathcal B$, then $Y\subseteq Y$, a contradiction. Suppose now that $A\subseteq B$ for some $A\in \mathcal A$ and $B\in \mathcal B$. Then, by definition, there exist elements $E,F,G\in \mathcal F$ that together with $A$ form a diamond, and $A$ is the maximal element of that diamond. This means that $B$, $E$, $F$ and $G$ form an induced copy of $\mathcal D_2$ in $\mathcal F$, a contradiction. This is illustrated below. Thus, $\mathcal A$ and $\mathcal B$ are indeed disjoint.

\begin{figure}[h]
\hspace{0.1cm}\\
\centering
\begin{tikzpicture}
\node[label=right:$A$] (top) at (4,1) {$\bullet$}; 
\node[label=left:$F$] (left) at (3,0) {$\bullet$};
\node[label=right:$G$] (right) at (5,0) {$\bullet$};
\node[label=below:$E$] (bottom) at (4,-1) {$\bullet$};
\node [label=right:$B$] (toptwo) at (4,2) {$\bullet$};
\draw (top) -- (left) -- (bottom) -- (right) -- (top);
\draw (top)--(toptwo);
\end{tikzpicture}
\end{figure}
\FloatBarrier
Moreover, since both $\mathcal A$ and $\mathcal B$ are antichains, and, by definition, no element of $\mathcal B$ is contained in any element of $\mathcal A$, we also get that $\mathcal A\cup \mathcal B$ does not contain a copy of $C_2$. We are therefore left to show that whenever we add a new element to $\mathcal A\cup\mathcal B$ we obtain a copy of $C_2$. As an intermediary step we show the following.
\begin{claim1} Let $X\in \mathcal P([n])$. Then either $X\subseteq B$ for some $B\in\mathcal B$, or $A\subseteq X$ for some $A\in\mathcal A_1$.
\end{claim1}
\begin{proof}
If $X\in \mathcal F$, then by the definition of $\mathcal B$, $X$ must be a subset of some element of $\mathcal B$. Thus, we may assume that $X\not\in\mathcal F$. This means that $\mathcal F\cup \{X\}$ contains a copy of a diamond, and $X$ must be an element of that copy. If $X$ is the maximal element of the diamond, then $X\in\mathcal A_0$, so there exists $A\in\mathcal A_1$ such that $A\subseteq X$. This corresponds to the left-most diagram below. If $X$ is not the maximal element of the diamond, then $X\subset H$, where $H$ is the maximal element of the diamond. This case corresponds to the middle and right diagrams below.

\begin{figure}[h]
\hspace{0.1cm}\\
\centering
\begin{tikzpicture}
\node[label=above:$X$] (top) at (6,1) {$\bullet$}; 
\node[label=left:$F$] (left) at (5,0) {$\bullet$};
\node[label=right:$G$] (right) at (7,0) {$\bullet$};
\node[label=below:$E$] (bottom) at (6,-1) {$\bullet$};
\draw (top) -- (left) -- (bottom) -- (right) -- (top);

\node [label=above:$H$] (top) at (10,1) {$\bullet$}; 
\node (left) at (9,0) {$\bullet$};
\node [label=right:$X$](right) at (11,0) {$\bullet$};
\node (bottom) at (10,-1) {$\bullet$};
\draw (top) -- (left) -- (bottom) -- (right) -- (top);

\node[label=above:$H$] (top) at (14,1) {$\bullet$}; 
\node (left) at (13,0) {$\bullet$};
\node (right) at (15,0) {$\bullet$};
\node[label=below:$X$] (bottom) at (14,-1) {$\bullet$};
\draw (top) -- (left) -- (bottom) -- (right) -- (top);
\end{tikzpicture}
\end{figure}
\FloatBarrier
But in this case we have that $X\subset H$, and $H\subseteq B$ for some $B\in \mathcal B$, thus $X\subset B$, which finishes the proof of the claim.
\end{proof}
Finally, let $X\not\in \mathcal A\cup\mathcal B$. If $X\subset B$ for some $B\in\mathcal B$, then $X$ and $B$ form an induced copy of $C_2$. Hence we may assume that $X\not\subset B$ for all $B\in \mathcal B$. By the previous claim, there exists $A\in\mathcal A_1$ such that $A\subseteq X$. If $A\in \mathcal A$, then $X$ and $A$ form an induced copy of $C_2$ in $\mathcal A\cup\mathcal B\cup\{X\}$. If $A\not \in \mathcal A$, then there exists $C\in\mathcal B$ such that $C \subseteq A$, hence $C$ and $X$ form a copy of $C_2$ in $\mathcal A\cup\mathcal B\cup \{X\}$, which finishes the proof of the lemma.

\end{proof}

\begin{lemma}\label{niceproperty2} We have that $\min\{|B| : B\in \mathcal B\}\geq n-|\mathcal F|$, and $\max\{|A| :A\in \mathcal A\}\leq |\cF|$.
\end{lemma}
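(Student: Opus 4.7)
The plan is to prove both inequalities by building explicit injections into $\mathcal F$: from $[n]\setminus B$ for a fixed $B\in\mathcal B$ (yielding $n-|B|\le|\mathcal F|$), and from $A$ for a fixed $A\in\mathcal A$ (yielding $|A|\le|\mathcal F|$). In each case the saturation property of $\mathcal F$ produces, for every $i$, an element of $\mathcal F$ whose intersection with a natural reference set identifies $i$ uniquely.

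For the lower bound on $|B|$, fix $B\in\mathcal B$ and $i\in[n]\setminus B$. Maximality of $B$ in $\mathcal F$ gives $B\cup\{i\}\notin\mathcal F$, so by saturation there is an induced diamond in $\mathcal F\cup\{B\cup\{i\}\}$ involving $B\cup\{i\}$. The first observation I would make is that $B\cup\{i\}$ must be the top of this diamond, since otherwise the actual top $H\in\mathcal F$ would satisfy $H\supsetneq B\cup\{i\}\supsetneq B$, contradicting the maximality of $B$. Writing the diamond as $R_i\subsetneq P_i,Q_i\subsetneq B\cup\{i\}$ with $P_i,Q_i$ incomparable and all three in $\mathcal F$, I check that $P_i$ and $Q_i$ cannot both lie inside $B$: incomparability forces the inclusions to be strict, and $(R_i,P_i,Q_i,B)$ would already be an induced diamond inside $\mathcal F$. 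Choose any $f(i)\in\{P_i,Q_i\}$ not contained in $B$; since $f(i)\subsetneq B\cup\{i\}$, this forces $f(i)\cap([n]\setminus B)=\{i\}$, which witnesses injectivity of $f:[n]\setminus B\to\mathcal F$.

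For the upper bound on $|A|$, fix $A\in\mathcal A$ and a witnessing diamond $(R,P,Q,A)$ from the definition of $\mathcal A_0$, with $R,P,Q\in\mathcal F$; in particular $A\notin\mathcal F$, since otherwise this diamond would already sit inside $\mathcal F$. For each $i\in A$ I define $f(i)\in\mathcal F$ as follows. If $A\setminus\{i\}\in\mathcal F$, set $f(i)=A\setminus\{i\}$. Otherwise, saturation supplies an induced diamond in $\mathcal F\cup\{A\setminus\{i\}\}$ using $A\setminus\{i\}$; minimality of $A$ inside $\mathcal A_0$ forbids $A\setminus\{i\}$ from being the top of this diamond, so the top is some $H_i\in\mathcal F$ strictly containing $A\setminus\{i\}$, and I set $f(i)=H_i$. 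The crucial check here is that $i\notin H_i$: if $i\in H_i$ then $A\subseteq H_i$, and then $R\subsetneq P,Q\subsetneq A\subseteq H_i$ exhibits the induced diamond $(R,P,Q,H_i)$ entirely inside $\mathcal F$, a contradiction. In both cases $f(i)\cap A=A\setminus\{i\}$, and this intersection pattern distinguishes different values of $i$, so $f:A\to\mathcal F$ is injective.

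The main obstacle, to the extent there is one, is the second bound: it requires simultaneously exploiting minimality of $A$ inside $\mathcal A_0$ (to keep $A\setminus\{i\}$ off the top of any new diamond) and $A$'s membership in $\mathcal A_0$ (to supply the pre-existing diamond that rules out $i\in H_i$). The first bound is essentially immediate once one notices that $B\cup\{i\}$ can only be the top of its diamond; the bulk of the bookkeeping there is just verifying that incomparability prevents the diamond from collapsing into $\mathcal F$.
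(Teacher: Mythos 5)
Your proof is correct and takes essentially the same approach as the paper: for each $i$ outside $B$ (resp. inside $A$), use the maximality of $B$ (resp. minimality of $A$ in $\mathcal A_0$) to pin down where $B\cup\{i\}$ (resp. $A\setminus\{i\}$) can sit inside the new diamond, then extract an injective witness in $\mathcal F$. Your case analysis for the first bound is marginally cleaner (you avoid the paper's explicit split on whether one of the middle elements equals $B$), but the underlying argument is the same.
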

\begin{proof}
We begin by showing that $\min \{|B|:B\in \mathcal B\}\geq n-|\mathcal F|$. Let $S\in \mathcal B$. For each element $i\notin S$ we will find an element $X_i\in\mathcal F$ such that $X_i\setminus S=\{i\}$. Since any two such $X_i$ must be different, we get that there are at most $|\mathcal F|$ singletons outside $S$, as desired.

By the maximality of $S$ we have that $S\cup\{i\}\notin\mathcal F$. Therefore, $\mathcal F\cup \{S\cup \{i\}\}$ contains a diamond and $S\cup\{i\}$ must be part of that diamond. Let $A$, $B$ and $C$ be three sets in $\mathcal F$ such that they form a diamond together with $S\cup\{i\}$. By the maximality of $S$, $S\cup \{i\}$ must be the maximal element of the diamond. Let $A$ be the minimal element, and $B$ and $C$ the other two incomparable elements of the diamond.

If $B\neq S$ and $C\neq S $, then one of $B$ and $C$ must contain $i$, otherwise $A$, $B$, $C$ and $S$ induce a diamond in $\mathcal F$, a contradiction. Therefore, without loss of generality, we may assume that $i\in B$. Since $B\subset S\cup\{i\}$, we have $B\setminus S=\{i\}$, as desired. If on the other hand, without loss of generality, $B=S$, since $B$ and $C$ are incomparable, we must have $i\in C$, which again gives us $C\setminus S=\{i\}$.

We will now show that $\max \{|A|:A\in \mathcal A\}\leq |\mathcal F|$. Let $S\in \mathcal A$. For every $i\in S$ we will find an element $Z_i\in\mathcal F$ such that $S\setminus Z_i=\{i\}$. Since by construction these $Z_i$ are pairwise distinct, we can only have at most $|\mathcal F|$ singletons in $S$, finishing the proof.

If $S\setminus \{i\}\in\mathcal F$, we are done by taking $Z_i=S\setminus\{i\}$. Suppose now that $S\setminus\{i\}\notin\mathcal F$. This implies that $\mathcal F\cup\{S\setminus\{i\}\}$ contains an diamond which must have $S\setminus\{i\}$ as an element. 

If $S\setminus \{i\}$ is the maximal element of the diamond, then by definition $S\setminus \{i\}$ is an element of $\mathcal A_0$, contradicting the minimality of $S$. Thus, $S\setminus\{i\}$ is not the maximal element of the diamond. Let $A$ be that maximal element. As $S\in\mathcal A$, $S$ is above a copy of $\mathcal V$ in $\mathcal F$, thus, in order to avoid forming a diamond in $\mathcal F$ together with $A$, we must have $S\not\subseteq A$. However, $S\setminus\{i\}\subset A$, which implies that $i\notin A$, and consequently $S\setminus A=\{i\}$, as desired.
\end{proof}

\begin{lemma}\label{niceproperty3}
If $|\mathcal F|<\frac{n}{2}$, then for at least $n +1- |\mathcal F|$ values of $i\in [n]$, there exists some $B\in \mathcal B$ for which $i\notin B$. Moreover, for at least $n+1 - 2|\mathcal F|$ values of $i\in [n]$, there exist both $A\in \mathcal A$ such that $i\in A$, and  $B\in\mathcal B$ such that $i\notin B$.
\end{lemma}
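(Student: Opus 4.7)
The plan for Part 1 is to show $|C|\le|\mathcal F|-1$, where $C:=\bigcap_{B\in\mathcal B}B$, which is equivalent to the statement since $\{i\in[n]:\exists B\in\mathcal B,\,i\notin B\}=[n]\setminus C$. For each $i\in C$, I would first observe that $[n]\setminus\{i\}\notin\mathcal F$: otherwise, since $[n]\notin\mathcal F$, the set $[n]\setminus\{i\}$ would be maximal in $\mathcal F$, hence in $\mathcal B$, contradicting $i\in C$. Saturation then forces an induced diamond in $\mathcal F\cup\{[n]\setminus\{i\}\}$, and since no element of $\mathcal F$ properly contains $[n]\setminus\{i\}$, this set must be the top of that diamond, so $\mathcal F$ contains a copy of $\mathcal V$ strictly inside $[n]\setminus\{i\}$.

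To convert this into an injection I fix an arbitrary $B^{\ast}\in\mathcal B$ (noting $C\subseteq B^{\ast}$) and, for each $i\in C$, mimic the proof of Lemma \ref{niceproperty2} applied to $B^{\ast}\setminus\{i\}$. Either $B^{\ast}\setminus\{i\}\in\mathcal F$, in which case I set $Y_i:=B^{\ast}\setminus\{i\}$, or saturation supplies an induced diamond in $\mathcal F\cup\{B^{\ast}\setminus\{i\}\}$ containing $B^{\ast}\setminus\{i\}$; this set cannot be the top of the diamond (else $B^{\ast}\in\mathcal F$ together with the three elements of $\mathcal F$ below $B^{\ast}\setminus\{i\}$ would form an induced diamond already in $\mathcal F$), and a short case analysis on whether $B^{\ast}\setminus\{i\}$ is the bottom or a middle extracts an element $Y_i\in\mathcal F$ satisfying $Y_i\cap B^{\ast}=B^{\ast}\setminus\{i\}$. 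Different $i$ yield different $Y_i$ (as $B^{\ast}\setminus Y_i=\{i\}$ recovers $i$), and $B^{\ast}$ itself is not among the $Y_i$ since $B^{\ast}\cap B^{\ast}=B^{\ast}\neq B^{\ast}\setminus\{i\}$. Hence $|C|+1\le|\mathcal F|$, as required.

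For Part 2, set $U:=[n]\setminus C$ and $T:=\bigcup_{A\in\mathcal A}A$; the quantity in question is $|U\cap T|$, and inclusion–exclusion yields $|U\cap T|\ge|U|+|T|-n$. Part 1 gives $|U|\ge n+1-|\mathcal F|$, so it suffices to show $|T|\ge n-|\mathcal F|$, equivalently $|[n]\setminus T|\le|\mathcal F|$. By Lemma \ref{niceproperty1}, $\mathcal A\cup\mathcal B$ is a $C_2$-saturated antichain; since $\emptyset\notin\mathcal A\cup\mathcal B$, every $i\in[n]$ must lie in some member of $\mathcal A\cup\mathcal B$, so $[n]\setminus T\subseteq\bigcup\mathcal B$. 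A further injection into $\mathcal F$, built in the same spirit as Part 1 but now centered on a set $A\in\mathcal A$ and exploiting $|A|\le|\mathcal F|$ from Lemma \ref{niceproperty2}, should then deliver $|[n]\setminus T|\le|\mathcal F|$, completing the argument.

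The hardest step is the diamond case analysis in Part 1, specifically the subcase in which $B^{\ast}\setminus\{i\}$ appears as a middle element of an induced diamond whose top is $B^{\ast}$ itself: the other two elements of the diamond do not automatically satisfy $Y\cap B^{\ast}=B^{\ast}\setminus\{i\}$, so the naive choice of $Y_i$ breaks down. Resolving this will likely require either an adaptive choice of $B^{\ast}$ for each $i$, or substituting the induced diamond we already exhibited above $[n]\setminus\{i\}$ for the one above $B^{\ast}\setminus\{i\}$; an analogous careful handling is anticipated for the injection in Part 2.
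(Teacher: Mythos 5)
Your overall reduction for Part 1 is sound: bounding $|C|$ where $C=\bigcap_{B\in\mathcal B}B$ is indeed equivalent to the first assertion, and the intended injection into $\mathcal F$ is a natural plan. But the gap you flag at the end is not a loose end that a minor trick will close; it is the heart of the lemma. If you take $B^{\ast}\in\mathcal B$ of maximum size and $i\in C$ (so $i$ lies in \emph{every} element of $\mathcal B$), then in the induced diamond on $B^{\ast}\setminus\{i\}$ the ``bottom'' case is impossible (it would produce a set larger than $B^{\ast}$), the ``top'' case is impossible, and in the ``middle with $A\neq B^{\ast}$'' case one gets an $A\in\mathcal B$ with $i\notin A$, contradicting $i\in C$. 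So for $i\in C$ you are \emph{always} forced into the one configuration you cannot handle: $B^{\ast}\setminus\{i\}$ is a middle element and the top is $B^{\ast}$ itself, in which neither remaining element $Q$ nor $R$ satisfies $Y\cap B^{\ast}=B^{\ast}\setminus\{i\}$. The paper resolves exactly this configuration in Proposition~\ref{nicelemma4}, Case~1, via a substantially longer argument: choose a \emph{minimal} middle element $B$, pass to a new induced diamond on $B\setminus\{i\}$, prove (Claim~B) that its maximal vertex is incomparable with $B$, and conclude. Crucially, the output of that argument is not a single set $Y_i$ with $Y_i\cap B^{\ast}=B^{\ast}\setminus\{i\}$ but a pair $W,W\setminus\{i\}\in\mathcal F$, and the count then goes through a graph-acyclicity argument (the edges $\{W_i,W_i\setminus\{i\}\}$ on vertex set $\mathcal F$ form a forest) rather than a direct injection. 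Your suggested fixes --- varying $B^{\ast}$ with $i$, or re-using the diamond above $[n]\setminus\{i\}$ --- do not obviously yield the needed map, and in the first case would also break injectivity.

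For Part 2, your inclusion--exclusion plan requires $|T|\geq n-|\mathcal F|$, i.e.\ $|W|\leq|\mathcal F|$ for $W=[n]\setminus T$, and you offer no proof of this; you only say an injection ``should'' deliver it. This is a genuinely stronger claim than what the paper establishes. The paper only obtains $|W|\leq 2|\mathcal F|-1$ (as used in Section~4), and it gets there indirectly: for each $i\in N$ it picks a minimum-size $B_i\in\mathcal B$ avoiding $i$, pigeonholes to get $|\Phi|\geq|N|-|\mathcal B|$ indices sharing their $B_i$ with another index, and then shows each such $i$ is covered by $\mathcal A$. That argument never bounds $|W|$ by $|\mathcal F|$. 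Indeed, if $|W|\leq|\mathcal F|$ held, plugging it into the Section~4 computation would yield $|\mathcal F|\geq n/4$, strictly improving the paper's main theorem; you should treat such an ``easy'' improvement as a red flag that the step is either false or requires a new idea well beyond the scope of a quick injection. As it stands, Part~2 of your proposal is not a proof.
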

The key to proving Lemma~\ref{niceproperty3} is the following. 
\begin{proposition} For each $i\in [n]$, either there exists some $W\in \mathcal P([n])$ such that $i\in W$ and $W\setminus\{i\},W\in \mathcal F$, or $i\notin X$ for some $X\in \mathcal B$.
\label{nicelemma4}
\end{proposition}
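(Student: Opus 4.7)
The plan is to argue by contradiction: assume every $B \in \mathcal{B}$ contains $i$ and that no desired $W$ exists. Fix any $B \in \mathcal{B}$. An auxiliary fact I would establish first is that any $X \in \mathcal{F}$ strictly containing $B \setminus \{i\}$ must equal $B$: either $i \in X$, forcing $X \supseteq B$ hence $X = B$ by maximality; or $i \notin X$, in which case the maximal element of $\mathcal{F}$ above $X$ contains both $i$ and $B \setminus \{i\}$, equals $B$ by maximality, but then $X \subsetneq B$ strictly contains $B \setminus \{i\}$ without containing $i$, impossible. Since $B \setminus \{i\} \notin \mathcal{F}$ (else $W = B$), applying diamond-saturation to $B \setminus \{i\}$ forces it to be a middle of the resulting induced diamond with $B$ as the top (the top position would produce a diamond $P, Q, R, B$ in $\mathcal{F}$; the bottom position requires two distinct $\mathcal{F}$-supersets of $B \setminus \{i\}$, both forced to be $B$). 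We extract $P, C \in \mathcal{F}$ with $P \subsetneq C \subsetneq B$, $i \in C$, and $P \subseteq C \setminus \{i\}$; if $P = C \setminus \{i\}$ then $W = C$ works, otherwise $C \setminus \{i\} \notin \mathcal{F}$ and $P \subsetneq C \setminus \{i\}$ strictly.

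Next, consider the collection of all $C^* \in \mathcal{F}$ with $i \in C^*$, $C^* \setminus \{i\} \notin \mathcal{F}$, and admitting some $P \in \mathcal{F}$ strictly contained in $C^* \setminus \{i\}$; this collection is nonempty by the previous step. Pick one of minimum cardinality, relabel it as $C$ with witness $P$, and analyse the diamond in $\mathcal{F} \cup \{C \setminus \{i\}\}$. The top case immediately yields a diamond $P', Q', R', C$ in $\mathcal{F}$, contradiction. The bottom case produces $C \setminus \{i\} \subsetneq b, c \subsetneq d$ in $\mathcal{F}$ with $b, c$ incomparable, and then $P \subsetneq C \setminus \{i\} \subsetneq b, c$ makes $P, b, c, d$ a diamond in $\mathcal{F}$, contradiction.

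The middle case produces $P^*, C', D' \in \mathcal{F}$ with $P^* \subsetneq C \setminus \{i\} \subsetneq D'$, $C' \subsetneq D'$, $P^* \subsetneq C'$, and $C'$ incomparable with $C \setminus \{i\}$. I would split on whether $i \in D'$. If $i \in D'$ then $D' \supseteq C$, and either $C' \subsetneq C$ (in which case incomparability forces $i \in C'$, so either $P^* = C' \setminus \{i\}$ giving $W = C'$, or $C'$ lies in our collection with $|C'| < |C|$, contradicting minimality), or $C$ and $C'$ are incomparable (so $P^*, C, C', D'$ is a diamond in $\mathcal{F}$). If $i \notin D'$, pick $B_{D'} \in \mathcal{B}$ with $D' \subseteq B_{D'}$; by the global hypothesis $i \in B_{D'}$, hence $B_{D'} \supseteq D' \cup \{i\} \supseteq C$, and $P^*, C, D', B_{D'}$ is a diamond in $\mathcal{F}$ (here $C$ and $D'$ are incomparable because $i \in C \setminus D'$ while $D' \setminus (C \setminus \{i\})$ is nonempty and avoids $i$, so lies outside $C$). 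Every case contradicts, so $W$ must exist.

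The main obstacle is the middle case, particularly the sub-case $i \notin D'$: this is exactly where the global hypothesis that every $B \in \mathcal{B}$ contains $i$ is essential, being used to lift $D'$ to a maximal $B_{D'}$ that automatically contains $C$ and so closes the final diamond. The minimality of $|C|$ plays the dual role on the other side, turning the recursive $C'$ configurations into contradictions.
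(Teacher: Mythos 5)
Your proof is correct, and it takes a genuinely different organizational route from the paper. The paper proceeds directly: it starts from a \emph{maximum-size} element $S \in \mathcal{B}$, analyzes the diamond formed by $S \setminus \{i\}$, splits according to whether the top of that diamond equals $S$, and in the case $A = S$ it fixes a single auxiliary diamond, chooses the middle element $B$ of \emph{locally minimal} cardinality within that diamond, and extracts a maximal element $P$ with $i \notin P$ by a one-step incomparability argument. You instead argue globally by contradiction (``every $B\in\mathcal B$ contains $i$ and no $W$ exists''), which gives you a clean auxiliary rigidity fact --- any $X\in\mathcal F$ strictly above $B\setminus\{i\}$ must be $B$ itself --- and then you set up a well-founded descent over the \emph{global} collection of all $C\in\mathcal F$ with $i\in C$, $C\setminus\{i\}\notin\mathcal F$, and a strict $\mathcal F$-predecessor of $C\setminus\{i\}$. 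The minimal element of that collection yields a contradiction in every position of the new diamond; the middle case splits on $i\in D'$ versus $i\notin D'$, and the latter sub-case is closed precisely by lifting $D'$ to a maximal element, which by the contradiction hypothesis must contain $i$ and hence $C$. This buys a more uniform case analysis and avoids the paper's separate treatment of $A\neq S$, at the cost of being non-constructive: the paper's version actually exhibits a concrete $P\in\mathcal B$ missing $i$ (or a concrete $W$), while yours only shows that the negation is untenable. Both approaches hinge on the same mechanism --- applying diamond-saturation to a set of the form $(\text{something in }\mathcal F)\setminus\{i\}$ and exploiting diamond-freeness of $\mathcal F$ together with a minimality choice --- so the underlying intuition is shared even though the scaffolding differs.
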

\begin{proof}
Let $i\in[n]$, and $S\in\mathcal B$ be an element of maximum size. If $i\notin S$ we are done by setting $X=S$, so we may assume that $i\in S$. If $S\setminus\{i\}\in\cF$, we are also done by setting $W = S$. Therefore we may assume that $S\setminus\{i\}\notin\mathcal F$. This means that $\mathcal F\cup\{S\setminus\{i\}\}$ contains an induced copy of a diamond, which must contain $S\setminus \{i\}$. Let $A,B,C$ be elements in $\mathcal F$ such that $S\setminus\{i\},A,B,C$ forms a diamond. If $S\setminus \{i\}$ is the minimal element of the diamond and $A$ is the maximal element of the diamond, then $S\setminus \{i\}\subsetneq B\subsetneq A$. Thus $|S\setminus \{i\}|\leq |A|-2$, which implies that $|S|\leq |A|-1$, contradicting  the fact that $S$ has maximal size. 

If $S\setminus \{i\}$ is the maximal element of the diamond, then $S, A, B$ and $C$ would form a diamond in $\mathcal F$, as illustrated below, a contradiction.
\begin{figure}[hbt!]
\hspace{0.1cm}\\
\centering
\begin{tikzpicture}
\node[label=right:$S\setminus\{i\}$] (top) at (4,1) {$\bullet$}; 
\node[label=left:$B$] (left) at (3,0) {$\bullet$};
\node[label=right:$C$] (right) at (5,0) {$\bullet$};
\node[label=below:$A$] (bottom) at (4,-1) {$\bullet$};
\node [label=right:$S$] (toptwo) at (4,2) {$\bullet$};
\draw (top) -- (left) -- (bottom) -- (right) -- (top);
\draw (top)--(toptwo);
\end{tikzpicture}
\end{figure}
  \FloatBarrier
Therefore $S\setminus\{i\}$ must be one of the two elements in the middle layer of the diamond. As in the diagram below, let $A$ be the unique maximal element and $C$ the unique minimal element of such a diamond.
\begin{figure}[hbt!]
\hspace{0.1cm}\\
\centering
\begin{tikzpicture}
\node[label=above:$A$] (top) at (4,1) {$\bullet$}; 
\node[label=left:$S\setminus\{i\}$] (left) at (3,0) {$\bullet$};
\node[label=right:$B$] (right) at (5,0) {$\bullet$};
\node[label=below:$C$] (bottom) at (4,-1) {$\bullet$};
\draw (top) -- (left) -- (bottom) -- (right) -- (top);

\end{tikzpicture}
\end{figure}
\FloatBarrier
We now split our analysis into two cases based on whether or not $A=S$. 
\begin{case1}
$A\neq S$.
\end{case1}
Since $S\setminus \{i\}\subset A$, we have that $|S|\leq |A|$. Since $S$ was chosen to be a set of $\mathcal F$ of maximal size, we must have that $A\in \mathcal B$. Moreover, since $A\neq S$, and they have the same size, we have that $A$ and $S$ must be incomparable. As $S\setminus \{i\}\subset A$, we get that $i\notin A$, as desired.
\begin{case1}
$A= S$. 
\end{case1}
Without loss of generality, let $B$ be of minimal cardinality with respect to the above configuration. In other words, $B$ is a set of minimal cardinality such that $B\in\mathcal F$, and $B$, $S$, $S\setminus\{i\}$ and $C$ form a diamond for some $C\in \mathcal F$. 

Since $B\subset S$ and $S\setminus\{i\}$ and $B$ are incomparable, we must have $i\in B$. If $B\setminus \{i\}\in\cF$ we are done by setting $W=B$. Thus, we may assume that $B\setminus\{i\}\notin\mathcal F$. We therefore have that $\mathcal F\cup\{B\setminus \{i\}\}$ contains a diamond which uses $B\setminus\{i\}$ as an element. Let $P,Q,R$ be elements in $\mathcal F$ such that $B\setminus\{i\}$, $P$, $Q$ and $R$ form a diamond. Note that since $C\subset S\setminus\{i\}$, $i\notin C$, so $C\subseteq B\setminus\{i\}$. We now observe that $B\setminus \{i\}$ cannot be the minimal element of the diamond as otherwise $C$, $P$, $Q$ and $R$ form a diamond in $\mathcal F$. Similarly, $B\setminus\{i\}$ cannot be the maximal element of the diamond as otherwise $S$, $P$, $Q$ and $R$ would form a diamond in $\mathcal F$. Therefore, as illustrated below, $B\setminus\{i\}$ must be one of the elements in the middle layer of the diamond.
\begin{figure}[hbt!]
\hspace{0.1cm}\\
\centering
\begin{tikzpicture}
\node[label=above:$P$] (top) at (4,1) {$\bullet$}; 
\node[label=left:$B\setminus\{i\}$] (left) at (3,0) {$\bullet$};
\node[label=right:$Q$] (right) at (5,0) {$\bullet$};
\node[label=below:$R$] (bottom) at (4,-1) {$\bullet$};
\node[label=left:$B$] (extra) at (3,1) {$\bullet$};
\draw (top) -- (left) -- (bottom) -- (right) -- (top);
\draw (extra)--(left);
\end{tikzpicture}
\end{figure}
\FloatBarrier
Without loss of generality, we may assume that $P$ is a maximal element of $\mathcal F$, i.e. $P\in \mathcal B$. 
\begin{claim2}
$B$ and $P$ are incomparable.
\end{claim2}
\begin{proof}
Suppose that $P\subseteq B$. Since $B\subsetneq S$, this implies that $P\subsetneq S$, contradicting the maximality of $P$.

Suppose now that $B\subseteq P$. If $B$ and $Q$ were incomparable, then $R,B,Q,P$ would form a diamond in $\mathcal F$. Therefore we either have $B\subseteq Q$, or $Q\subseteq B$. Since $B\setminus\{i\}$ and $Q$ are incomparable, we cannot have $B\subseteq Q$. Thus $Q\subsetneq B$, which implies that $i\in Q$.

We now notice that $Q\subseteq B\subset S$, and since $i\in Q$ we also have that $Q$ and $S\setminus\{i\}$ are incomparable. Moreover, $R\subset B\setminus\{i\}\subset S\setminus\{i\}$. This means that we have the diamond below, which contradicts the minimality of $B$, finishing the proof of the claim.
\begin{figure}[hbt!]
\hspace{0.1cm}\\
\centering
\begin{tikzpicture}
\node[label=above:$S$] (top) at (4,1) {$\bullet$}; 
\node[label=left:$S\setminus\{i\}$] (left) at (3,0) {$\bullet$};
\node[label=right:$Q$] (right) at (5,0) {$\bullet$};
\node[label=below:$R$] (bottom) at (4,-1) {$\bullet$};
\draw (top) -- (left) -- (bottom) -- (right) -- (top);
\end{tikzpicture}
\end{figure}
\FloatBarrier
\end{proof}
Since $B\setminus\{i\}\subset P$, and $B$ and $P$ are incomparable, we must have that $i\notin P$, completing the proof in the case where $A=S$.
\end{proof}

We are now ready to prove Lemma \ref{niceproperty3}.
\begin{proof}[Proof of Lemma \ref{niceproperty3}]
Let $M=\{i\in [n] : \text{there exists some $W\in \mathcal P([n])$}$ such that $i\in W$ and $W,W\setminus \{i\}\in \mathcal F\}$, and $N= \cup_{B\in \mathcal B}([n]\setminus B)$. By Proposition~\ref{nicelemma4} we have that $M\cup N = [n]$, hence $|N|\geq n - |M|$.

\begin{claim3} $|M|\leq |\mathcal F|-1$.
\end{claim3}
\begin{proof}
For each $i\in M$, we pick a representative $W_i\in \mathcal F$ such that $i\in W_i$ and $W_i, W_i\setminus\{i\}\in \mathcal F$. Let $G$ be the graph with vertex set $\mathcal F$ such that $\{X,Y\}\in E(G)$ if and only $\{X,Y\}=\{W_i, W_i\setminus\{i\}\}$ for some $i\in M$. By construction $G$ has exactly $|M|$ edges as $\{W_i,W_i\setminus\{i\}\} \neq \{W_j,W_j\setminus\{j\}\}$ for all $i \neq j$. We will show that $G$ is acyclic. 

Suppose that $G$ contains a cycle $A_1,A_2,\dots,A_k$ for some $k\geq 3$. Then there exist distinct singletons $i_1,\dots,i_k$ such that $A_{j+1} = A_j \pm \{i_j\}$ for $j\in [k-1]$, and $A_1 = A_k \pm \{i_k\}$. This means that for all $j\neq k$, $i_k\in A_j$ if and only if $i_k\in A_{j+1}$. Hence, by transitivity, $i_k\in A_1$ if and only if $i_k\in A_k$. But this is a contradiction as $A_1 = A_k \pm \{i_k\}$. 

Therefore, $G$ is an acyclic graph, which means that $|E(G)|\leq |V(G)|-1$. Thus $|M|\leq |\mathcal F|-1$.
\end{proof}
We therefore have $|N|\geq n - |M|\geq n + 1 - |\mathcal F|$, which means that there are at least $n+1-|\mathcal F|$ singletons $i\in[n]$ for which there exists $B\in\mathcal B$ such that $i\notin B$, proving the first part of Lemma~\ref{niceproperty3}.

We now move on to the second part of the Lemma~\ref{niceproperty3}

For every $i\in N$ let $B_i\in \mathcal B$ be an element of minimum size of $\{X\in \mathcal B: i\notin X\}$. Let $\Phi = \{i \in N : B_i = B_j \text{ for some $j\in N\setminus\{i\}$}\}$. By construction, we have $|\Phi|\geq |N| - |\mathcal B|\geq n+1-2|\mathcal F|$. We will show that if $i\in \Phi$, then $i\in A$ for some $A\in\mathcal A$, which will finish the proof.

Towards a contradiction, suppose there exists $i\in \Phi$ such that $i\notin A$ for all $A\in\mathcal A$. Since $\mathcal A\cup\mathcal B$ is $C_2$-saturated, for every $j\in B_i$, there exists some $X_j\in\mathcal A\cup\mathcal B$ such that $X_j $ is comparable (or equal) to $(B_i\setminus\{j\})\cup \{i\}$. This is because if $(B_i\setminus\{j\})\cup \{i \}\in \mathcal A\cup\mathcal B$, then we may set $X_j = (B_i\setminus\{j\})\cup\{i \}$. Otherwise, let $X_j$ be an element of $\mathcal A\cup\mathcal B$ that induces a copy of $C_2$ with $(B_i\setminus\{j\})\cup \{i\}$.

If $X_j \subsetneq (B_i\setminus\{j\})\cup \{i\}\), then $|X_j|<|B_i|$. By assumption, $B_i = B_{i_2}$ for some $i_2\in N \setminus \{i\}$. Therefore, $i_2\notin X_j$ and $|X_j|<|B_{i_2}|$, which, by the minimality of $|B_{i_2}|$, implies that $X_j\in \mathcal A$, and so, by assumption, $i\notin X_j$. However, this would mean that $X_j\subset B_i$, which is a contradiction since $X_j$ and $B_i$ would induce a $C_2$ in $\mathcal A\cup\mathcal B$. Therefore, $(B_i\setminus\{j\})\cup\{i\}\subseteq X_j$ for all $j\in B_i$.

We observe that $X_j\neq B_i$ which means that $X_j$ and $B_i$ are incomparable. Since $B_i\setminus\{j\}\subseteq X_j$, we must have $j\notin X_j$. This means that for all $j\in B_i$ the elements $X_j$ are pairwise distinct. Furthermore, by Lemma~\ref{niceproperty2}, we have that $n-|\mathcal F| \leq|B_i|\leq|X_j|$. Since $|\mathcal F|<\frac{n}{2}$, we must have that $X_j\in\mathcal B$. But then $\{X_j : j\in B_i\}$ is a set of $|B_i| >\frac{n}{2}$ elements of $\mathcal B$. Since $\mathcal B\subseteq \mathcal F$, this contradicts the assumption that $|\mathcal F|<\frac{n}{2}$, completing the proof of the lemma.
\end{proof}
  
\section{Lower bounding a general class of set systems}
In this section we analyse a general class of pairs of set systems. The main result of this section is Lemma \ref{boundingsizecalAcalB}. We then later apply this result to our diamond-saturated family in order to lower bound $|G(\mathcal A)\cup \mathcal B|$. We begin with some definitions.

Let $X$ be a set of size $n$, and $m$ a positive integer such that $2m+1\leq |X|=n$. We define $\mathcal L(X,m)$ to be the set of disjoint pairs of subsets of $\mathcal P(X)$, $(\mathcal G,\mathcal H)$ such that all elements of $\mathcal G$ have size at most $m$, all elements of $\mathcal H$ have size at least $n-m$, and for every $i\in X$, there exists an element $G\in \mathcal G$ such that $i\in G$. Additionally, any induced copy of $C_2$ in $\mathcal G\cup \mathcal H$ must be contained in $\mathcal H$, and for any $A\in \mathcal P(X)$ with $m\leq|A|\leq|X|-m$, there exists an element $B\in\mathcal G\cup\mathcal H$ such that either $B\subseteq A$, or $A\subseteq B$. 

We can think of $\mathcal L(X,m)$ as being pairs of low-level sets that cover the ground set, the $\mathcal G$, that are incomparable to high-level elements, the $\mathcal H$, with the additional property that adding any new element to $\mathcal G\cup\mathcal H$ that is (essentially) between $\mathcal G$ and $\mathcal H$ creates a copy of a $C_2$.

Next, let $\mathcal I,\mathcal J\subseteq \mathcal P(X)$ and define $\mathcal V_0(\mathcal I,\mathcal J)=\{A\in \mathcal P(X):B\not\subseteq A\text{  for all }B\in\mathcal J, \text{ and }\exists P, Q, R\in\mathcal I \text{ such that } A,P,Q,R \text{ form a diamond, with } A \text { the maximal element}\}$.

We define $\mathcal V(\mathcal I,\mathcal J)$ to be the set of minimal elements of $\mathcal V_0(\mathcal I, \mathcal J)$.

Finally, define $\mathcal L^*(X,m)$ to be the set of pairs of $\mathcal P(X)$, $(\mathcal I,\mathcal J)$, such that $(\mathcal V(\mathcal I, \mathcal J), \mathcal J)\in\mathcal L(X,m)$, and all sets of $\mathcal I$ have size at most $m$. Lastly, define $f(n,m) = \min \{|\mathcal I\cup\mathcal J|:(\mathcal I,\mathcal J)\in\mathcal L^*([n],m)\}$.   

We are now ready to prove the main result of this section.
\begin{lemma}\label{boundingsizecalAcalB} For any positive integers $n$ and $m$ such that $n\geq 2m+1$, we have \(f(n,m)\geq n - 2m\).
\end{lemma}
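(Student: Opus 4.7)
The plan is to proceed in two stages: a structural lemma clarifying $\mathcal{V}(\mathcal{I},\mathcal{J})$, followed by a counting argument.

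\textbf{Structural lemma.} I would first establish that for every $V \in \mathcal{V}(\mathcal{I},\mathcal{J})$ and every witnessing diamond $P \subsetneq Q, R \subsetneq V$ in $\mathcal{I}$ (with $Q,R$ incomparable), one has $V = Q \cup R$. Suppose for contradiction that $i \in V \setminus (Q \cup R)$, and set $V' = V \setminus \{i\}$. Since $i \notin Q \cup R$ we have $Q,R \subseteq V'$, and since $V' \subseteq V$ no element of $\mathcal{J}$ lies in $V'$. Provided $V' \neq Q$ and $V' \neq R$, the quadruple $(P,Q,R,V')$ is a diamond in $\mathcal{I}^3$ with $V'$ maximal, so $V' \in \mathcal{V}_0(\mathcal{I},\mathcal{J})$, contradicting the minimality of $V$. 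The boundary case $V' = Q$ would force $R \not\subseteq V'$ by incomparability of $Q,R$, hence $i \in R$, contradicting $i \notin R$; the case $V' = R$ is symmetric. An immediate consequence, combined with the covering condition of $\mathcal{L}([n],m)$, is that every $i \in [n]$ belongs to some element of $\mathcal{I}$.

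\textbf{Counting.} My plan for the main inequality is a near-injection from $[n]$ into $\mathcal{I} \cup \mathcal{J}$, losing at most $2m$ indices. In the spirit of Claim~3 in the proof of Lemma~\ref{niceproperty3}, I would consider the graph on $\mathcal{I} \cup \mathcal{J}$ whose edges are pairs of sets $\{X,X'\}$ differing by a single element $i$. By the same transitivity argument (tracing membership of the distinguishing element around any putative cycle), this graph is acyclic, and hence the number of indices $i$ admitting such an ``edge'' is at most $|\mathcal{I} \cup \mathcal{J}| - 1$. For the remaining indices $i$, which admit no such adjacent pair in $\mathcal{I} \cup \mathcal{J}$, I would use the covering condition on medium-sized sets in $\mathcal{L}([n],m)$ together with the size constraints $|Q| \leq m$ for $Q \in \mathcal{I}$ and $|[n] \setminus H| \leq m$ for $H \in \mathcal{J}$; the first bound absorbs at most $m$ ``unpaired'' indices on the $\mathcal{I}$-side and the second at most $m$ on the $\mathcal{J}$-side, for a total loss of at most $2m$.

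\textbf{Main obstacle.} I expect the structural lemma to be short and the substance of the proof to reside in the counting step. The delicate part is orchestrating the interaction between the $\mathcal{I}$- and $\mathcal{J}$-sides so that the cumulative deficit is exactly $2m$ rather than something larger, and invoking the covering condition precisely where it is needed to force the required pairings. In particular, showing that unpaired indices on the two sides are absorbed independently (so that $m + m = 2m$ truly suffices) is where the covering condition on sets of size in $[m, n-m]$, which is the defining feature of $\mathcal{L}$, must be used substantively.
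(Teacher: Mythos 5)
Your structural lemma ($V = Q \cup R$ for any witnessing diamond) is correct and does appear in the paper's proof. However, the counting step — the heart of the argument — has a genuine gap, and I do not believe the route you propose can be completed.

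The problem is that nothing in the definition of $\mathcal{L}^*([n],m)$ forces $\mathcal{I} \cup \mathcal{J}$ to contain \emph{any} pairs of sets differing by a single element. In the paper's Lemma~\ref{niceproperty3}, the analogous graph argument (Claim~C there) is powered by Proposition~\ref{nicelemma4}, which gives a dichotomy — for each $i$, either there is a pair $W, W\setminus\{i\} \in \mathcal{F}$, or $i$ is omitted by some maximal set — and this dichotomy is proved by exploiting diamond-saturation of $\mathcal{F}$ at multiple points. No such dichotomy is available for an abstract $(\mathcal{I},\mathcal{J}) \in \mathcal{L}^*$. Indeed $\mathcal{I}$ can plausibly consist of $\emptyset$ together with an antichain of sets that all differ from $\emptyset$ (and from each other) by many elements, in which case the graph you build on the $\mathcal{I}$-side has no edges at all; similarly $\mathcal{J}$ could be an antichain of co-small sets pairwise differing by many elements. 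Your bound ``unpaired indices $\leq 2m$'' is therefore not something the size constraints $|Q| \leq m$, $|[n]\setminus H| \leq m$ can deliver: those bounds control the size of individual sets, not the existence of adjacent pairs. The covering condition on medium-sized sets gives comparability to some element of $\mathcal{V}(\mathcal{I},\mathcal{J}) \cup \mathcal{J}$ — a much weaker relation than differing by one element — and it is a condition on $\mathcal{V}(\mathcal{I},\mathcal{J})$, not on $\mathcal{I}$ itself, which creates a further mismatch with your graph on $\mathcal{I}\cup\mathcal{J}$ since $|\mathcal{V}(\mathcal{I},\mathcal{J})|$ can be much larger than $|\mathcal{I}|$.

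The paper's actual proof takes a very different route: an induction on $n$. Setting $\mathcal{G}_i = \{A \in \mathcal{V}(\mathcal{I},\mathcal{J}) : i \in A\}$, one chooses an index $j$ for which $\mathcal{G}_j$ is minimal, lets $[t]$ be the set of indices with $\mathcal{G}_k = \mathcal{G}_j$, and restricts to ground set $[n]\setminus[t]$ via $\widehat{\mathcal{I}} = \{I \in \mathcal{I} : I \cap [t] = \emptyset\}$ and $\widehat{\mathcal{J}} = \{J \setminus [t] : J \in \mathcal{J},\ [t-1] \subseteq J\}$. After verifying that the restricted pair stays in $\mathcal{L}^*$, the two key counting claims are $|\mathcal{I}| \geq |\widehat{\mathcal{I}}| + 1$ and $|\mathcal{J}| \geq |\widehat{\mathcal{J}}| + t - 1$; the second is where the covering condition on medium sets is used substantively (via a family $S_A$ of perturbations of a set $A \in \mathcal{V}$), and it is precisely this part that would need to be reinvented to make any direct counting argument go through. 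Combining these with $f(n-t,m) \geq n - t - 2m$ from induction gives $|\mathcal{I}\cup\mathcal{J}| \geq n - 2m$. This delicate choice of $t$ (possibly $t > 1$) and the way losses on the $\mathcal{I}$- and $\mathcal{J}$-sides compensate each other is what the direct graph approach cannot replicate, because it has no mechanism for locating which indices should be ``charged'' to which side.
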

\begin{proof}
Our proof will be by induction on $n$. When $n=1$ the result is trivially true. Assume now that $n>1$ and that the claim is true for all $n_0<n$. In other words, for all $n_0<n$ and all $m\in\mathbb N$ such that $n_0\geq2m+1$, we have that $f(n_0,m)\geq n_0-2m$.

Let $(\mathcal I,\mathcal J)\in \mathcal L^*([n],m)$ and assume that $|\mathcal I|+|\mathcal J|<n-2m$. We recall that this means that all the sets in $\mathcal I$ have size at most $m$ and $(\mathcal V(\mathcal I, \mathcal J),\mathcal J)\in\mathcal L([n],m)$. This immediately implies that all sets of $\mathcal J$ have size at least $n-m$, thus $\mathcal I$ and $\mathcal J$ are disjoint.

For every $i\in [n]$ we define $\mathcal G_i$ to be the family $\{A\in \mathcal V(\mathcal I, \mathcal J): i\in A\}$. Let $j\in[n]$ be such that $\mathcal G_j$ is a minimal element of the poset $\{\mathcal G_1, \dots,\mathcal G_n\}$ (ordered by inclusion). By reordering the ground set if necessary, we may assume that the set $\{k\in [n] : \mathcal G_k = \mathcal G_j\}=[t]$, for some $t\in [n]$. We will use this $t$ to pass to the same set-up, but for a smaller ground set. In order to do so, we define $\widehat{\mathcal I} = \{I\in \mathcal I :[t]\cap I= \emptyset\}$, $\widehat{\mathcal J}= \{J\setminus [t] : J\in\mathcal J, [t-1]\subseteq J\}$, and $\widehat{\mathcal V(\mathcal I, \mathcal J)} = \{A\in\mathcal V(\mathcal I, \mathcal J) : [t]\cap A = \emptyset\}$. We first show that this restriction preserves the structure.
\begin{claim4}
\label{claim1}
If $n\geq 2m+t+1$, then $\mathcal V(\widehat{\mathcal I},\widehat{\mathcal J})=\widehat{\mathcal V(\mathcal I, \mathcal J)}$.
\end{claim4}
\begin{proof}
For one direction, let $A\in\widehat{\mathcal V(\mathcal I, \mathcal J)}$. We will prove that $A\in\mathcal V(\widehat{\mathcal I},\widehat{\mathcal J})$. Since $A\in\mathcal V(\mathcal I,\mathcal J)$, there exist $P,Q,R\in\mathcal I$ which satisfy the following diagram:

\begin{figure}[h]
\hspace{0.1cm}\\
\centering
\begin{tikzpicture}
\node[label=above:$A$] (top) at (4,1) {$\bullet$}; 
\node[label=left:$P$] (left) at (3,0) {$\bullet$};
\node[label=right:$Q$] (right) at (5,0) {$\bullet$};
\node[label=below:$R$] (bottom) at (4,-1) {$\bullet$};
\draw (top) -- (left) -- (bottom) -- (right) -- (top);
\end{tikzpicture}
\end{figure}
\FloatBarrier
Moreover, since $[t]\cap A=\emptyset$, we also have that $P$, $Q$ and $R$ have empty intersection with $[t]$, thus by definition, $P,Q,R\in\widehat{\mathcal I}$. Hence $A$ is the maximal element of a diamond where the other 3 elements are in $\widehat{\mathcal I}$. 

Suppose for a contradiction that there exists $B\subseteq A$ for some $B\in\widehat{\mathcal J}$. Then $|B|\geq |B^*|-t$ for some $B^*\in\mathcal J$. However, since $A\in\mathcal V(\mathcal I, \mathcal J)$ and $(\mathcal V(\mathcal I, \mathcal J),\mathcal J)\in\mathcal L([n],m)$, we have that $|A|\leq m $ and $|B^*|\geq n-m$. But now we have $m\geq |A|\geq |B|\geq|B^*|-t\geq n-m-t$, which is a contradiction. Thus, no element of $\widehat{\mathcal J}$ is a subset of $A$.

Finally, suppose there exists $Z\subsetneq A$ such that $Z\in\mathcal V_0(\widehat{\mathcal I},\widehat{\mathcal J} )$. Therefore, there exist $ W,X,Y\in \widehat{\mathcal I}$ such that $Z, W, X, Y$ form a diamond where $Z$ is the top element, as illustrated below.
\begin{figure}[h]
\hspace{0.1cm}\\
\centering
\begin{tikzpicture}
\node[label=above:$Z$] (top) at (4,1) {$\bullet$}; 
\node[label=left:$W$] (left) at (3,0) {$\bullet$};
\node[label=right:$X$] (right) at (5,0) {$\bullet$};
\node[label=below:$Y$] (bottom) at (4,-1) {$\bullet$};
\draw (top) -- (left) -- (bottom) -- (right) -- (top);
\end{tikzpicture}
\end{figure}
\FloatBarrier
As argued before, since $Z$ is a subset of $A$, this will imply that $W,X,Y$ are elements of $\mathcal I$. Moreover, $Z$ cannot contain an element of $\mathcal J$ since $A$ does not contain one. Consequently this would mean that $A\notin\mathcal V(\mathcal I,\mathcal J)$ as $Z$ contradicts its minimality. Thus no such $Z$ exists, proving that $\widehat{\mathcal V(\mathcal I, \mathcal J)}\subseteq \mathcal V(\widehat{\mathcal I},\widehat{\mathcal J})$. 

For the other direction, let $A\in \mathcal V(\widehat{\mathcal I},\widehat{\mathcal J})$. Then there exist $P,Q,R\in\widehat{\mathcal I}$ such that $A,P,Q,R$ form a diamond in which $A$ is the top point, as illustrated below.
\begin{figure}[h]
\hspace{0.1cm}\\
\centering
\begin{tikzpicture}
\node[label=above:$A$] (top) at (4,1) {$\bullet$}; 
\node[label=left:$P$] (left) at (3,0) {$\bullet$};
\node[label=right:$Q$] (right) at (5,0) {$\bullet$};
\node[label=below:$R$] (bottom) at (4,-1) {$\bullet$};
\draw (top) -- (left) -- (bottom) -- (right) -- (top);
\end{tikzpicture}
\end{figure}
\FloatBarrier
Consider $P\cup Q$. If some $B\in\widehat{\mathcal J}$ would be a subset of $P\cup Q$, then it would also be a subset of $A$, a contradiction. Thus, by minimality, $A=P\cup Q$. Since $[t]\cap P = [t]\cap Q=\emptyset$, we also have $[t]\cap A = \emptyset$. Since $P,Q,R\in\widehat{\mathcal I}\subseteq\mathcal I$, it is enough to show that no set of $\mathcal J$ is contained in $A$, and that $A$ is minimal with respect to the diamond configuration.

Suppose that $B\subseteq A$ for some $B\in\mathcal J$. Since $A\cap [t]=\emptyset$, we have that $B\in\widehat{\mathcal J}$, which is a contradiction.

Finally, suppose that there exist $W,X,Y\in\mathcal I$ and $Z\subsetneq A$ such that $W,X,Y, Z$ form a diamond with $Z$ being its top element, as depicted below.
\begin{figure}[h]
\hspace{0.1cm}\\
\centering
\begin{tikzpicture}
\node[label=above:$Z$] (top) at (4,1) {$\bullet$}; 
\node[label=left:$W$] (left) at (3,0) {$\bullet$};
\node[label=right:$X$] (right) at (5,0) {$\bullet$};
\node[label=below:$Y$] (bottom) at (4,-1) {$\bullet$};
\draw (top) -- (left) -- (bottom) -- (right) -- (top);
\end{tikzpicture}
\end{figure}
\FloatBarrier
Since $A\cap[t]=\emptyset$, we also have that $W\cap[t]=X\cap[t]=Y\cap[t]=\emptyset$, thus $W,X,Y\in\widehat{\mathcal I}$. Moreover, $Z$ does not contain an element of $\widehat{\mathcal J}$ as $A$ does not contain one, contradicting the minimality of $A$ in $\mathcal V(\widehat{\mathcal I}, \widehat{\mathcal J})$. We therefore have that $\mathcal V(\widehat{\mathcal I},\widehat{\mathcal J})\subseteq\widehat{\mathcal V(\mathcal I, \mathcal J)}$, which finishes the claim.
\end{proof}
The next property we would like to preserve is the following.
\begin{claim4} If $n \geq 2m+t+1$, then $(\widehat{\mathcal V(\mathcal I,\mathcal J)},\widehat{\mathcal J})\in \mathcal L([n]\setminus [t] ,m)$.
\label{claim2}
\end{claim4}
\begin{proof}
To begin with, it is clear that the ground set is $[n]\setminus[t]$. Since $(\mathcal V(\mathcal I, \mathcal J),\mathcal J)\in\mathcal L([n],m)$, we have that all elements of $\mathcal V(\mathcal I, \mathcal J)$, and so all the elements of $\widehat{\mathcal V(\mathcal I, \mathcal J)}$, have size at most $m$. Also, by construction, all elements of $\widehat{\mathcal J}$ have size at least $n-m-t=|[n]\setminus[t]|-m$. Since $m<n-m-t$, this also implies that $\widehat{\mathcal V(\mathcal I, \mathcal J)}$ and $\widehat{\mathcal J}$ are disjoint. 

Next, suppose that we have two sets $E,F\in\widehat{\mathcal V(\mathcal I, \mathcal J)}\cup\widehat{\mathcal J}$ such that $E\subsetneq F$. Since $\mathcal V(\mathcal I, \mathcal J)$ is $C_2$-free, then so is $\widehat{\mathcal V(\mathcal I, \mathcal J)}$, hence at least one of $E$ or $F$ must be in $\widehat{\mathcal J}$. Suppose that one of them is not in $\widehat{\mathcal J}$. As every set in $\widehat{\mathcal V(\mathcal I, \mathcal J)}$ has size less than the size of any set in $\widehat{\mathcal J}$, we must have $E\in\widehat{\mathcal V(\mathcal I, \mathcal J)}\subseteq \mathcal V(\mathcal I, \mathcal J)$, and $F\in\widehat{\mathcal J}$. By definition, there exists an element $F^*\in\mathcal J$ such that $F = F^*\setminus[t]$. Since $E\subsetneq F\subseteq F^*$, we get that $E$ and $F^*$ form an induced copy of $C_2$ in $\mathcal V(\mathcal I, \mathcal J)\cup\mathcal J$ that is not completely contained in $\mathcal J$, a contradiction.

Now, let $E\in\mathcal P([n]\setminus[t])$ such that $m\leq|E|\leq n -m-t$. Consider the set $E\cup[t-1]$ which has size at least $m$ and less than $n-m$. Since $(\mathcal V(\mathcal I, \mathcal J),\mathcal J)\in \mathcal L([n],m)$, there exists $G\in \mathcal V(\mathcal I, \mathcal J)$ such that $G\subseteq E\cup[t-1]$, or there exists $H\in \mathcal J$ such that $E\cup[t-1]\subseteq H$. If $G\subseteq E\cup[t-1]$ for some $G\in \mathcal V(\mathcal I, \mathcal J)$, then $t\not\in G$. This means that $G\notin\mathcal G_t$. Since $\mathcal G_t=\mathcal G_k$ for all $k\in [t-1]$, we get that $k\notin G$ for all $k\in[t]$. This means that $G\cap [t]=\emptyset$, thus $G\in\widehat{\mathcal V(\mathcal I, \mathcal J)}$. If on the other hand $E\cup [t-1]\subseteq H$ for some $H\in\mathcal J$, then $[t-1]\subseteq H$, and $E\subseteq H\setminus[t]$ (as $t\notin E)$. Since $H\setminus [t]\in \widehat{\mathcal J}$, we conclude that $E$ is either a subset of $\widehat{\mathcal V(\mathcal I, \mathcal J)}$, or a subset of $\widehat{\mathcal J}$.

We are left to show that $\widehat{\mathcal V(\mathcal I, \mathcal J)}$ is a cover of $[n]\setminus[t]$. Let $x\in[n]\backslash [t]$. By construction, we have that $\mathcal G_x\neq \mathcal G_1$, and that $\mathcal G_x\not\subset \mathcal G_1$. Thus, there exists an element $M\in\mathcal G_x\setminus\mathcal G_1$. In particular, $x\in M$. Moreover, since $\mathcal G_1=\mathcal G_k$ for all $k\in [t]$, we have that $M\cap [t]=\emptyset$, thus by definition $M\in\widehat{\mathcal V(\mathcal I, \mathcal J)}$, and $x\in M$. Since $x$ was arbitrary, this shows that $\widehat{\mathcal V(\mathcal I, \mathcal J)}$ is a cover of $[n]\setminus[t]$, completing the proof of the claim.
\end{proof}
\begin{claim4}If $n \geq 2m+t+1$, then $(\widehat{\mathcal I},\widehat{\mathcal J})\in \mathcal L^*([n]\setminus [t],m)$.
\end{claim4}
\begin{proof}
In order to prove the claim we have to show that all elements of $\widehat{\mathcal I}$ have size at most $m$, and that $(\mathcal V(\widehat{\mathcal I}, \widehat{\mathcal J}),\widehat{\mathcal J})\in\mathcal L([n]\setminus [t],m)$. The first condition is trivially true as $\widehat{\mathcal I}$ is a subset of $\mathcal I$, and all sets of $\mathcal I$ have size at most $m$.

The second condition is also true as by Claim~\ref{claim1} we get that $\mathcal V(\widehat{\mathcal I},\widehat{\mathcal J})=\widehat{\mathcal V(\mathcal I, \mathcal J)}$, which together with Claim~\ref{claim2} gives $(\mathcal V(\widehat{\mathcal I},\widehat{\mathcal J}),\widehat{\mathcal J})=(\widehat{\mathcal V(\mathcal I,\mathcal J)},\widehat{\mathcal J})\in \mathcal L([n]\setminus [t] ,m)$, which finished the proof.
\end{proof}
\begin{claim4}
$|\mathcal I|\geq |\widehat{\mathcal I}|+1$.
\end{claim4}
\begin{proof}
Since $((\mathcal V(\mathcal I,\mathcal J),\mathcal J)\in \mathcal L([n],m)$, there exists an element $G\in\mathcal V(\mathcal I,\mathcal J)$ such that $1\in G$. Moreover, by definition, there exists elements $X,Y,Z\in \mathcal I$ such that $G, X, Y, Z$ form a diamond with $G$ being its top element, as illustrated below.
\begin{figure}[h]
\hspace{0.1cm}\\
\centering
\begin{tikzpicture}
\node[label=above:$G$] (top) at (4,1) {$\bullet$}; 
\node[label=left:$X$] (left) at (3,0) {$\bullet$};
\node[label=right:$Y$] (right) at (5,0) {$\bullet$};
\node[label=below:$Z$] (bottom) at (4,-1) {$\bullet$};
\draw (top) -- (left) -- (bottom) -- (right) -- (top);
\end{tikzpicture}
\end{figure}
\FloatBarrier
As argued previously, by the minimality of $G$, we must have $G=X\cup Y$. Therefore, at least one of $X$ or $Y$ must contain 1. Assume without loss of generality that $1\in X$. We then have that $X$ is an element of $\mathcal I$, but it does not have empty intersection with $[t]$, hence it is not an element of $\widehat{\mathcal I}$. Thus $\widehat{\mathcal I}\subsetneq\mathcal I$, which finishes the proof.

\end{proof}
\begin{claim4}$|\mathcal J|\geq |\widehat{\mathcal J}|+t-1$.
\end{claim4}
\begin{proof}
If $t=1$, the claim is trivially true. Thus, we may assume that $t>1$. We clearly have that $|\{H\in\mathcal J : [t-1]\subseteq H\}|\geq |\widehat{\mathcal J}|$, which implies that $|\mathcal J|\geq |\widehat{\mathcal J}| + |\{H\in\mathcal J : [t-1]\not\subseteq H\}|$. We will show that for every $i\in [t-1]$, there exists an element $H\in \mathcal J$ such that $[t-1]\cap H = [t-1]\setminus\{i\}$. This gives $t-1$ distinct elements in $\{H\in\mathcal J : [t-1]\not\subseteq H\}$, which completes the proof.

Let $i\in[t-1]$ be an arbitrary element. Suppose that no element $H\in\mathcal J$ has $H\cap [t-1] = [t-1]\setminus\{i\}$. Since $(\mathcal V(\mathcal I,\mathcal J),\mathcal J)\in\mathcal L([n],m)$, there exists $A\in\mathcal V(\mathcal I,\mathcal J)$ such that $i\in A$. This implies that $k\in A$ for all $k\in [t]$ as $\mathcal G_k=\mathcal G_i$ for all $i,k\in[t]$. Furthermore, we also have that $A$ has size at most $m$.
    
We now define $S_A=\{A\setminus\{i\}\cup B : B\in \binom{[n]\setminus A}{m-|A|+1}\}$. Let $D\in S_A$. By construction, $D$ has size $m$, which means that there exists an element $X\in\mathcal V(\mathcal I,\mathcal J)\cup\mathcal J$ such that either $X\subseteq D$, or $D\subseteq X$. Suppose first that $D\subseteq X$. By construction, $D$, and consequently $X$, must contain at least one element of $[n]\setminus A$, which implies that $X\neq A$. If $i\in X$, since $A\setminus\{i\}\subseteq D\subseteq X$, then $A$ and $X$ will form a copy of $C_2$ in $\mathcal V(\mathcal I,\mathcal J)\cup\mathcal J$ that is not completely contained in $\mathcal J$, a contradiction. Therefore $i\notin X$, and so $A\cap X = A\setminus\{i\}$. Since $[t]\subseteq A$, we have that $X\cap [t] = [t]\setminus\{i\}$ and $X\cap [t-1]=[t-1]\setminus\{i\}$. Since we assumed that no such set exists in $\mathcal J$, we must have $X\in\mathcal V(\mathcal I,\mathcal J)$. But now this implies that $X\in\mathcal G_j$ for all $j\in[t]\setminus\{i\}$, and $X\notin\mathcal G_i$, which is a contradiction as all these sets $\cG_1,\dots,\cG_t$ are equal. Therefore, we must have $X\subseteq D$.

Since $D$ was chosen to be an arbitrary element of $S_A$, this means that for every $D\in S_A$, there exists $X\in\mathcal V(\mathcal I,\mathcal J)\cup\mathcal J$ such that $X\subseteq D$. Since any such $D$ has size $m$, we must have $X\in\mathcal V(\mathcal I,\mathcal J)$. Moreover, $X$ is the top of a diamond where the other 3 elements are in $\mathcal I$. Suppose that the two incomparable elements of the diamond are $R$ and $T$. We then must have $X=R\cup T$ by minimality. If both $R$ and $T$ would be contained in $A\setminus\{i\}$, then $X\subseteq A\setminus\{i\}\subsetneq A$, contradicting the $C_2$-free property of $\mathcal V(\mathcal I,\mathcal J)$. 

Therefore there exists a minimum integer $q$ and $Z_1,\dots,Z_q\in\mathcal I$ such that for every $D\in S_A$ there exists $a\in[q]$ such that $Z_a\subseteq D$, and $Z_a\setminus(A\setminus\{i\})\neq\emptyset$ (for the $R$ and $T$ above, the one not contained in $A\setminus\{i\}$ is one of the $Z$'s). Let $Y_a= Z_a\setminus (A\setminus\{i\})$. Therefore, for every $D\in S_A$, there exists $a\in[q]$ such that $Y_a\subseteq D\setminus (A\setminus\{i\})$. Consequently we have that for every $B\in \binom{[n]\setminus A}{m-|A|+1}$ there exists $a\in[q]$ such that $Y_a\subseteq B$. Note that this implies that $Y_a\subseteq [n]\setminus A$. 

Suppose that $q\leq n-m -1$. For every $a\in[q]$ we pick an element $j_a\in Y_a$, and consider the set $[n]\setminus(A\cup \{j_a:a\in [q]\})$. Note that $|[n]\setminus (A\cup \{j_a:a\in [q] \})|\geq n- |A|-q\geq m-|A|+1$, thus we can pick a set $B\subseteq [n]\setminus(A\cup \{j_a:a\in [q]\})$ of size $m-|A|+1$. But for such a $B$ we have, by construction, that $B\in\binom{[n\setminus A]}{m-|A|+1}$ and $B\not\supseteq Y_a$ for all $a\in [q]$, a contradiction. We therefore must have $q\geq n-m$.

But now this means that $|\mathcal I|\geq n-m$, which contradicts our assumption that $|\mathcal I\cup\mathcal J|\leq n-2m-1$. Therefore, for every $i\in [t-1]$ there exists an element $H\in \mathcal J$ such that $H\cap [t-1] = [t-1]\setminus \{i\}$, which completes the proof of the claim. 
\end{proof}
Putting everything together we have that $|\mathcal I\cup \mathcal J| = |\mathcal I|+|\mathcal J|\geq |\widehat{\mathcal I}|+1+|\widehat{\mathcal J}|+t-1=|\widehat{\mathcal I}|+|\widehat{\mathcal J}|+t$. If $n\geq 2m+t+1$ we have that $|\widehat{\mathcal I}|+|\widehat{\mathcal J}|=|\widehat{\mathcal I}\cup\widehat{\mathcal J}|\geq f(n-t,m)\geq n -t - 2m $ by our induction hypothesis, which implies that $|\mathcal I\cup \mathcal J|\geq n - 2m$. If on the other hand $n \leq 2m+t$ then $|\mathcal I\cup \mathcal J|\geq |\widehat{\mathcal I}|+|\widehat{\mathcal J}|+t\geq t\geq n-2m $. Therefore, in both cases we have $|\mathcal I\cup\mathcal J|\geq n-2m$, contradicting the initial assumption. Thus $f(n,m)\geq n-2m$, which completes the inductive step.
\end{proof}

\section{Proof of the main theorem}
The results in Section 2 and 3 are now aligned to obtain the desired lower bound for a diamond-saturated family. We will lower bound $G(\mathcal A)\cup\mathcal B$ with the help of Lemma~\ref{boundingsizecalAcalB}, after removing the set $W$, which we know how to upper bound by Lemma~\ref{niceproperty3}.

For clarity, we recall that $\mathcal B$ is the set of maximal elements of $\mathcal F$, $\mathcal A$ is the set of  minimal elements with the property that they are above a copy of $\mathcal V$ in $\mathcal F$, and do not contain any set of $\mathcal B$, and $G(\mathcal A)$ is comprised of all the sets in $\mathcal F$ that generate $\mathcal A$.
\begin{theorem}
Let $n\geq 1$, and $\mathcal F$ a diamond-saturated family with ground set $[n]$. Then $|\mathcal F|\geq\frac{n+1}{5}$.
\label{maintheorem}
\end{theorem}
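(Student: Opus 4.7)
Assume for contradiction that $|\mathcal F|<\frac{n+1}{5}$. Then in particular $|\mathcal F|<\frac{n}{2}$, so Lemmas~\ref{niceproperty1},~\ref{niceproperty2},~\ref{niceproperty3} all apply, and by the first lemma of Section~2 we may assume $\emptyset,[n]\notin\mathcal F$. Write $m=|\mathcal F|$ and $Y=[n]\setminus W$. The plan is to extract a pair $(\mathcal I,\mathcal J)\in\mathcal L^*(Y,m)$ from the data $G(\mathcal A)$ and $\mathcal B$, and apply Lemma~\ref{boundingsizecalAcalB} with ground set $Y$ to get $|\mathcal I|+|\mathcal J|\geq|Y|-2m$.

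The first preparatory step would be to lower-bound $|Y|$. For $A\in\mathcal A\subseteq\mathcal A_1$ take a diamond $R\subsetneq P,Q\subsetneq A$ in $\mathcal F$ witnessing $A\in\mathcal A_0$. Then $P\cup Q$ is itself the top of the diamond $R,P,Q,P\cup Q$ whose bottom and middle lie in $\mathcal F$, so $P\cup Q\in\mathcal A_0$; minimality of $A$ in $\mathcal A_0$ forces $A=P\cup Q$. Hence every $i\in A$ lies in some $P\in G(\mathcal A)$, so $A\subseteq Y$. Combining with Lemma~\ref{niceproperty3}, which supplies $\geq n+1-2m$ indices $i$ lying in some $A\in\mathcal A$, we obtain $|Y|\geq n+1-2m$.

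The central step is to set $\mathcal I=G(\mathcal A)$ and $\mathcal J=\{B\cap Y:B\in\mathcal B\}$ and verify that $(\mathcal I,\mathcal J)\in\mathcal L^*(Y,m)$. The size bounds are easy: $|I|\leq m$ for $I\in\mathcal I$ because each such $I$ lies inside some $A\in\mathcal A$ with $|A|\leq m$ by Lemma~\ref{niceproperty2}, and $|J|\geq|Y|-m$ for $J\in\mathcal J$ because $|B\cap Y|\geq|B|-|W|\geq(n-m)-(n-|Y|)=|Y|-m$. The $C_2$-control should come from diamond-saturation of $\mathcal F$: if $V\in\mathcal V(\mathcal I,\mathcal J)$ sits below some $B\cap Y\in\mathcal J$, then the diamond $R\subsetneq P,Q\subsetneq V\subseteq B$ (with $P,Q,R\in G(\mathcal A)$) gives an induced $\mathcal D_2$ inside $\mathcal F$, a contradiction. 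The covering of $Y$ by $\mathcal V(\mathcal I,\mathcal J)$ should be extracted from the paragraph above (every $i\in Y$ lies in some $A\in\mathcal A$, which almost always lies in $\mathcal V(\mathcal I,\mathcal J)$), and the saturation condition on sets of intermediate size should be obtained by lifting any $A\in\mathcal P(Y)$ with $m\leq|A|\leq|Y|-m$ to $A\cup$ (relevant part of $W$) $\in\mathcal P([n])$ and invoking the $C_2$-saturation of $\mathcal A\cup\mathcal B$ from Lemma~\ref{niceproperty1}.

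Once the verification is in place, Lemma~\ref{boundingsizecalAcalB} yields $|\mathcal I|+|\mathcal J|\geq|Y|-2m\geq n+1-4m$. Since $G(\mathcal A)$-sets have size $\leq m$ while $\mathcal B$-sets have size $\geq n-m$, the two are disjoint, and since $|\mathcal J|\leq|\mathcal B|$ we get $|\mathcal F|\geq|G(\mathcal A)\cup\mathcal B|=|G(\mathcal A)|+|\mathcal B|\geq|\mathcal I|+|\mathcal J|\geq n+1-4m$, so $5m\geq n+1$, contradicting our hypothesis. The main obstacle will be the verification that $(\mathcal I,\mathcal J)\in\mathcal L^*(Y,m)$, and specifically the covering of $Y$ by $\mathcal V(\mathcal I,\mathcal J)$ together with the intermediate-size saturation condition: each of these demands that one faithfully transfer the $C_2$-saturation property of $\mathcal A\cup\mathcal B$ in $\mathcal P([n])$ down to $\mathcal P(Y)$ after intersecting the large sets of $\mathcal B$ with $Y$, a step where the possible non-injectivity of $B\mapsto B\cap Y$ and the loss of structural information under the restriction have to be controlled carefully.
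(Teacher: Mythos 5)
Your proposal matches the paper's proof essentially exactly: take $\mathcal I = G(\mathcal A)$ and $\mathcal J = \widehat{\mathcal B} = \{B\setminus W : B\in\mathcal B\}$ (note $B\setminus W = B\cap Y$ in your notation), verify $(\mathcal I,\mathcal J)\in\mathcal L^*([n]\setminus W, |\mathcal F|)$, and apply Lemma~\ref{boundingsizecalAcalB} together with $|W|\leq 2|\mathcal F|-1$ from Lemma~\ref{niceproperty3}. The obstacle you flag at the end --- verifying the covering and intermediate-size saturation conditions for $\mathcal V(\mathcal I,\mathcal J)$ --- is resolved cleanly in the paper by first observing the identity $\mathcal V(G(\mathcal A),\widehat{\mathcal B}) = \mathcal A$ (a cardinality argument: $G(\mathcal A)$ generates $\mathcal A$, and no set of $\mathcal B$ or $\widehat{\mathcal B}$ can fit inside a union of two $G(\mathcal A)$-sets when $|\mathcal F|\leq n/4$), after which both conditions reduce to the $C_2$-saturation of $\mathcal A\cup\mathcal B$ already established in Lemma~\ref{niceproperty1} rather than requiring any fresh control of the restriction map $B\mapsto B\setminus W$.
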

\begin{proof}
If $\emptyset\in\mathcal F$, or $[n]\in\mathcal F$, then we know that $|\mathcal F|\geq n+1$, and so we may assume that neither $\emptyset$ or $[n]$ are in $\mathcal F$. This means that we are now in entirely in the regime of Section 2. Since $\frac{n+1}{5}<1$ for $n<4$, the result is trivially true for $n<4$, so we may assume that $n\geq 4$. Towards a contradiction, suppose that $\mathcal F <\frac{n+1}{5 }\leq \frac{n}{4}$, as $n\geq4$.

Recall that $W=\{i\in[n]:i\notin X\text{ for all } X\in\mathcal A\}$. By the second part of Lemma~\ref{niceproperty3}, we have that $|W|\leq 2|\mathcal F|-1\leq\frac{n}{2}-1$. Since $\mathcal A$ is a subset of the set of minimal elements that are above a copy of $\mathcal V$ in $\mathcal F$, if $A\in\mathcal A$, then $A=P\cup Q$ for some $P,Q\in G(\mathcal A)$. That means that if $i\in W$, then $i\notin X$ for all $X\in G(\mathcal A)$. Conversely, if $i\notin X$ for all $X\in G(\mathcal A)$, then by the same minimality argument we have $i\in W$. Therefore $W=\{i\in[n]:i\notin X\text{ for all }X\in G(\mathcal A)\}$. Let $\widehat{B}=\{B\setminus W:B\in\mathcal B\}$.

Moreover, by Lemma~\ref{niceproperty2}, we have that every set in $\mathcal A$ has size at most $|\mathcal F|$, and so every set in $G(\mathcal A)$ has size at most $|\mathcal F|$. Also, since every set in $\mathcal B$ has size at least $n-|\mathcal F|$, and $|\mathcal F|\leq\frac{n}{4}$, we must have that $G(\mathcal A)$ and $\mathcal B$ are disjoint. Furthermore, if $\widehat{B}\in\widehat{\mathcal B}$, then $|\widehat{B}|\geq n-|\mathcal F|-|W|=|[n]\setminus W|-|\mathcal F|$. This means that every set in $\widehat{B}$ has size at least $n+1-3|\mathcal F|$, which is greater than $|\mathcal F|$, as $|\mathcal F|\leq\frac{n}{4}$. Thus $\mathcal A$ and $\widehat{\mathcal B}$ are disjoint, as every element of $\mathcal A$ has size at most $|\mathcal F|\leq\frac{n}{4}$, and every element of $\widehat{\mathcal B}$ has size at least $|[n]\setminus W|-|\mathcal F|\geq\frac{n}{4}+1$.
\begin{claim5}$(G(\mathcal A),\widehat{\mathcal B})\in\mathcal L^*([n]\setminus W, |\mathcal F|)$.   
\end{claim5}
\begin{proof}
First, since $\mathcal L^*(X,m)$ is defined only when $2m+1\leq |X|$, we need to have $2|\mathcal F|+1\leq n-|W|$, or equivalently, $2|\mathcal F|+|W|+1\leq n$. This is true since $|W|\leq2|\mathcal F|-1$ and $|\mathcal F|\leq\frac{n}{4}$. Also, by construction, $G(\mathcal A)$ and $\widehat{\mathcal B}$ have ground set $[n]\setminus W$.

Since we already established that all sets of $G(\mathcal A)$ have size at most $|\mathcal F|$, we are left to prove that $\mathcal V(G(\mathcal A,\widehat{\mathcal B}),\widehat{\mathcal B})\in\mathcal L([n]\setminus W)$. Observe that if $X\in\mathcal V(G(\mathcal A),\widehat{\mathcal B})$, then, by minimality, it is the union of two sets if $G(\mathcal A)$, thus it cannot contain a subset of $\mathcal B$, as this would imply it contains a subset of $\widehat{\mathcal B}$. Hence we have that $\mathcal V(G(\mathcal A),\widehat{\mathcal B})\subseteq \mathcal A$. Moreover, since $G(\mathcal A)$ generates $\mathcal A$ and, by cardinality, no element of $\mathcal B$ or $\widehat{\mathcal B}$ can be a subset of an element of $\mathcal A$, we conclude that $\mathcal V(G(\mathcal A),\widehat{\mathcal B})=\mathcal A$.

We therefore must show that $(\mathcal A,\widehat{\mathcal B})\in\mathcal L([n]\setminus W, |\mathcal F|)$. We have already showed that $\mathcal A$ and $\widehat{\mathcal B}$ are disjoint, that every set of $\mathcal A$ has size at most $|\mathcal F|$ and every set of $\widehat{\mathcal B}$ has size at least $|n\setminus W|-|\mathcal F|$, and that $\mathcal A$ is a cover of $[n]\setminus W$ by construction. Suppose now that $\mathcal A\cup\widehat{\mathcal B}$ contains a copy of $C_2$, say $X_1\subsetneq X_2$, such that at least one of them is in $\mathcal A$. Since $\mathcal A$ is an antichain, we must have one element in $\mathcal A$ and one in $\widehat{\mathcal B}$, and since all sets of $\mathcal A$ have sizes less than the size of any element in $\widehat{\mathcal B}$, we must have $X_1\in\mathcal A$ and $X_2\in\widehat{\mathcal B}$, so $X_2=B\setminus W$ for some $B\in\mathcal B$. But now $X_1\subsetneq X_2=B\setminus W\subseteq B$, which contradicts the fact that $\mathcal A\cup\mathcal B$ is $C_2$-free (Lemma~\ref{niceproperty1}). Finally, suppose $X\in\mathcal P([n]\setminus W)$ is a set such that $|\mathcal F|\leq |X|\leq n-|W|-|\mathcal F|$. If $X\in\mathcal A$, we have $X\subseteq X$. If $X\in\mathcal B$, we have $X\setminus W\subseteq X$ and $X\setminus W\in\widehat{\mathcal B}$. Thus, we may assume that $X\notin \mathcal A\cup\mathcal B$, which, by Lemma ~\ref{niceproperty1}, implies that there exists $Y\in\mathcal A\cup\mathcal B$ such that $X\subset Y$, or $Y\subset X$. If $X\subset Y$, by cardinality we must have $Y\in\mathcal B$, and so $X\subset Y\setminus W$, and $Y\setminus W\in\widehat{\mathcal B}$. If on the other hand $Y\subset X$, we similarly must have $Y\in\mathcal A$, which finishes the proof of the claim.

\end{proof}
We now see that, since $G(\mathcal A)$ and $\mathcal B$ are disjoint and subsets of $\mathcal F$, $|\mathcal F|\geq |G(\mathcal A)|+|{\mathcal B}|\geq |G({\mathcal A})|+|\widehat{\mathcal B}|\geq f(n-|W|,|\mathcal F|)\geq n-|W|-2|\mathcal F|$, where the last inequality comes from Lemma~\ref{boundingsizecalAcalB}. Together with the fact that $|W|\leq 2|\mathcal F|-1$, we get that $|\mathcal F|\geq n+1-4|\mathcal F|$, or equivalently $|\mathcal F|\geq\frac{n+1}{5}$, a contradiction. Therefore, $\text{sat}^*(n,\mathcal D_2)\geq\frac{n+1}{5}$, ad desired.
\end{proof}
Finally, Theorem~\ref{maintheorem} together with the fact that $\text{sat}^*(n,\mathcal D_2)\leq n+1$, gives the following.
\begin{theorem} For all $n\geq1$, we have that $\frac{n+1}{5}\leq\text{sat}^*(n,\mathcal D_2)\leq n+1$.
\end{theorem}
\section{Concluding remarks}
We would like to mention that the linearity of the diamond, combined with Proposition 5 in \cite{gluing} shows that a large class of posets have saturation number at least linear. Recall that, given two finite posets $\mathcal P$ and $\mathcal Q$, we form the poset $\mathcal P*\mathcal Q$ by putting a copy of $\mathcal P$ entirely on top of a copy of $\mathcal Q$. This is also know as the linear sum of $\mathcal P$ and $\mathcal Q$. Let $\bullet$ represent the one element poset.
\begin{proposition}[Proposition 5 in \cite{gluing}]
Let $\mathcal P_1$ and $\mathcal P_2$ be any non-empty posets such that $\mathcal P_1$ does not have a unique maximal element and $\mathcal P_2$ does not have a unique minimal element. Then $\text{sat}^*(n,\mathcal P_2*\mathcal P_1)\geq\max\{\text{sat}^*(n,\mathcal P_2*\bullet), \text{sat}^*(n,\bullet*\mathcal P_1)\}$.
\end{proposition}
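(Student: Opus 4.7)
The plan is to establish both inequalities $\mathrm{sat}^*(n,\mathcal P_2*\mathcal P_1)\geq\mathrm{sat}^*(n,\bullet*\mathcal P_1)$ and $\mathrm{sat}^*(n,\mathcal P_2*\mathcal P_1)\geq\mathrm{sat}^*(n,\mathcal P_2*\bullet)$, which are equivalent under a complementation symmetry: the map $\mathcal F\mapsto\{[n]\setminus F:F\in\mathcal F\}$ sends $\mathcal Q$-saturated families to $\mathcal Q^{\mathrm{op}}$-saturated families, and $(\mathcal P_2*\mathcal P_1)^{\mathrm{op}}=\mathcal P_1^{\mathrm{op}}*\mathcal P_2^{\mathrm{op}}$, with the two hypotheses about unique extremes interchanging. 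So it suffices to prove the first inequality.

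Given a $\mathcal P_2*\mathcal P_1$-saturated family $\mathcal F$ of minimum size, the aim is to produce a $\bullet*\mathcal P_1$-saturated family of size at most $|\mathcal F|$. The first case is when $\mathcal F$ contains no induced copy of $\bullet*\mathcal P_1$; I claim $\mathcal F$ is then $\bullet*\mathcal P_1$-saturated. Fix $S\notin\mathcal F$; saturation gives an induced $\mathcal P_2*\mathcal P_1$ copy $\mathcal C$ in $\mathcal F\cup\{S\}$ involving $S$. Since $|\mathcal P_2|\geq 2$, the $\mathcal P_2$-part of $\mathcal C$ is nonempty, and one can always find an induced $\bullet*\mathcal P_1$ sub-copy of $\mathcal C$ containing $S$: take $S$ itself as the single bottom point if $S$ sits in the $\mathcal P_2$-part of $\mathcal C$, or pick any other element of the $\mathcal P_2$-part as that point if $S$ sits in the $\mathcal P_1$-part. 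Since $\mathcal F$ was $\bullet*\mathcal P_1$-free, this confirms $\bullet*\mathcal P_1$-saturation of $\mathcal F$ and yields $|\mathcal F|\geq\mathrm{sat}^*(n,\bullet*\mathcal P_1)$.

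The substantive case is when $\mathcal F$ contains an induced copy of $\bullet*\mathcal P_1$. My strategy is to iteratively modify $\mathcal F$, at each step strictly decreasing the number of induced $\bullet*\mathcal P_1$ copies without enlarging $|\mathcal F|$ and while preserving $\mathcal P_2*\mathcal P_1$-saturation. The natural move: fix an induced copy $\{b\}\cup A$ of $\bullet*\mathcal P_1$ in $\mathcal F$ with $b$ as the bottom, pick two incomparable maximal sets $A_1,A_2$ in $A$ (which exist because $\mathcal P_1$ has no unique maximum), choose $i\in A_1\setminus A_2$, and replace $b$ by $b\cup\{i\}$; the replacement is incomparable with $A_2$, destroying this particular $\bullet*\mathcal P_1$ copy. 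Iterating eventually produces a $\bullet*\mathcal P_1$-free family of size at most $|\mathcal F|$, to which the first case applies.

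The main obstacle will be verifying the replacement step: one must ensure the modified family neither contains a forbidden $\mathcal P_2*\mathcal P_1$ copy nor loses the saturation property for any set outside of it. This is precisely where the hypothesis that $\mathcal P_2$ has no unique minimal element becomes essential, since it tightly constrains which sets can sit below $A$ in $\mathcal F$: any two incomparable such sets, together with $A$, would already complete a forbidden $\mathcal P_2*\mathcal P_1$ copy. Executing this case analysis — tracking how each replacement affects both the saturation property and the total count of induced $\bullet*\mathcal P_1$ copies, and choosing $i$ carefully among multiple candidates when the naive choice breaks saturation — is the technical heart of the proof.
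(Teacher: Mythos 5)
This proposition is quoted from \cite{gluing} and not proved in the present paper, so there is no in-paper argument to compare against; I can only assess your proposal on its own terms. Your first two paragraphs are sound: the complementation duality correctly reduces the statement to one inequality, and the ``easy case'' is correct --- if $\mathcal F$ is $\mathcal P_2*\mathcal P_1$-saturated and contains no induced $\bullet*\mathcal P_1$, then any new set $S$ creates an induced $\mathcal P_2*\mathcal P_1$ copy, from which one element of the $\mathcal P_2$-part together with the whole $\mathcal P_1$-part gives an induced $\bullet*\mathcal P_1$, so $\mathcal F$ is $\bullet*\mathcal P_1$-saturated. The problem is that essentially all of the content lies in the other case, and there your argument is only a plan whose load-bearing steps are explicitly deferred (``executing this case analysis \dots\ is the technical heart of the proof''). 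Modifying a saturated family while preserving saturation is exactly the kind of step that routinely fails: after replacing $b$ by $b\cup\{i\}$ you must check (i) no induced $\mathcal P_2*\mathcal P_1$ is created, (ii) every $S$ outside the new family --- including $b$ itself --- still completes a copy, even though the witnessing copies for such $S$ may all have used $b$, and (iii) the number of induced $\bullet*\mathcal P_1$ copies strictly drops. None of these is argued, and (iii) is genuinely in doubt: $b\cup\{i\}$ can enter \emph{new} copies in roles other than the one you destroyed (e.g.\ as a non-extremal element above some $b'\not\subseteq b$ with $i\in b'$), so your monovariant need not decrease and termination is not established.

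There are also two concrete errors. First, with the paper's convention $\mathcal P*\mathcal Q$ puts $\mathcal P$ \emph{on top}, so $\bullet*\mathcal P_1$ has the single point above $\mathcal P_1$; your Case~2 destroys copies with the single point $b$ at the \emph{bottom} of a copy of $\mathcal P_1$, i.e.\ copies of $\mathcal P_1*\bullet$, which is not one of the posets in the statement (this is repairable by dualizing, but as written the argument targets the wrong poset). Second, your claimed use of the hypothesis on $\mathcal P_2$ --- ``any two incomparable such sets, together with $A$, would already complete a forbidden $\mathcal P_2*\mathcal P_1$ copy'' --- is false for general $\mathcal P_2$: two incomparable sets on the appropriate side of an induced copy of $\mathcal P_1$ yield an induced $\mathcal A_2$ there, not an induced $\mathcal P_2$, unless $\mathcal P_2$ happens to be the two-element antichain. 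So the mechanism by which the hypothesis ``$\mathcal P_2$ has no unique minimal element'' is supposed to enter is not actually identified. As it stands the proposal establishes the inequality only for saturated families that happen to be $\bullet*\mathcal P_1$-free, and the general case remains open.
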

After two straightforward applications of the above proposition, we get the following.
\begin{corollary}
Let $\mathcal P_1$ and $\mathcal P_2$ be any non-empty posets such that $\mathcal P_1$ does not have a unique maximal element and $\mathcal P_2$ does not have a unique minimal element. Then $\text{sat}^*(n,\mathcal P_2*\mathcal A_2*\mathcal P_1)\geq\text{sat}^*(n,\mathcal D_2)\geq\frac{n+1}{5}$, where $\mathcal A_2$ is the antichain of size 2.
\end{corollary}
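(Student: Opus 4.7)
The plan is to apply Proposition 5 twice, carefully decomposing the linear sum $\mathcal{P}_2 * \mathcal{A}_2 * \mathcal{P}_1$ in two different ways so that on each application exactly one ``$\bullet$'' is peeled off, and then invoke Theorem~\ref{maintheorem} to finish.

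For the first application, I would view $\mathcal{P}_2 * \mathcal{A}_2 * \mathcal{P}_1$ as $\mathcal{P}_2 * (\mathcal{A}_2 * \mathcal{P}_1)$. Since $\mathcal{P}_2$ has no unique minimal element by hypothesis, and the poset $\mathcal{A}_2 * \mathcal{P}_1$ has the two elements of $\mathcal{A}_2$ sitting on top (so it has no unique maximal element), Proposition 5 applies and yields
\[
\text{sat}^*(n, \mathcal{P}_2 * \mathcal{A}_2 * \mathcal{P}_1) \geq \text{sat}^*(n, \bullet * \mathcal{A}_2 * \mathcal{P}_1).
\]

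For the second application, I would instead view $\bullet * \mathcal{A}_2 * \mathcal{P}_1$ as $(\bullet * \mathcal{A}_2) * \mathcal{P}_1$. Here $\mathcal{P}_1$ has no unique maximal element by hypothesis, and $\bullet * \mathcal{A}_2$ has the two elements of $\mathcal{A}_2$ sitting at the bottom, so it has no unique minimal element. A second application of Proposition 5 then gives
\[
\text{sat}^*(n, \bullet * \mathcal{A}_2 * \mathcal{P}_1) \geq \text{sat}^*(n, \bullet * \mathcal{A}_2 * \bullet) = \text{sat}^*(n, \mathcal{D}_2).
\]
Combining the two inequalities with Theorem~\ref{maintheorem} gives $\text{sat}^*(n, \mathcal{P}_2 * \mathcal{A}_2 * \mathcal{P}_1) \geq \text{sat}^*(n, \mathcal{D}_2) \geq \frac{n+1}{5}$. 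There is no real obstacle here; the only subtlety is choosing the two decompositions so that at each step the side being kept satisfies the required hypothesis (no unique extremal element), which is precisely why the antichain $\mathcal{A}_2$ in the middle is what makes the argument go through.
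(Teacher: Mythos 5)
Your proof is correct and is exactly the argument the paper has in mind when it says ``two straightforward applications of the above proposition'': you carefully check the hypotheses at each step (using that $\mathcal A_2$ sitting at an extremity supplies the needed pair of extremal elements) and then invoke Theorem~\ref{maintheorem} on $\bullet*\mathcal A_2*\bullet=\mathcal D_2$.
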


Finally, since the saturation number for a multipartite complete poset that does not have two consecutive layers of size 1 was shown to be $O(n)$ \cite{gluing}, the above corollary implies that the saturation number for all such posets, that also have a layer of size 2, is linear.
\begin{corollary}
Let $k\geq2$ and $n_1, n_2,\dots,n_k$ be positive integers. Let $K_{n_1,\dots,n_k}$ denote the complete poset with $k$ layers of sizes $n_1,\dots,n_k$, in this order, where $n_1$ is the size of the bottom layer. If there exists no $i\in[k]$ such that $n_i=n_{i+1}=1$, and there exists $j\in[k]$ such that $n_j=2$, then $\text{sat}^*(n,K_{n_1,\dots,n_k})=\Theta(n)$.
\end{corollary}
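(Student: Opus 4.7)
The plan splits into matching upper and lower bounds.

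The upper bound $\text{sat}^*(n, K_{n_1,\dots,n_k}) = O(n)$ is immediate from the cited result of~\cite{gluing}, which applies precisely because no two consecutive $n_i$ equal $1$.

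For the lower bound, I would apply the preceding Corollary (the one about $\mathcal P_2 * \mathcal A_2 * \mathcal P_1$). The idea is to decompose $K_{n_1,\dots,n_k}$ as a linear sum $\mathcal P_2 * \mathcal A_2 * \mathcal P_1$, where $\mathcal A_2$ is a layer of size two, $\mathcal P_1 = K_{n_1,\dots,n_{j-1}}$, and $\mathcal P_2 = K_{n_{j+1},\dots,n_k}$ for some $j \in [k]$ with $n_j = 2$. The preceding Corollary then needs $\mathcal P_1$ and $\mathcal P_2$ non-empty (so $2 \leq j \leq k-1$), $\mathcal P_1$ with no unique maximum (so $n_{j-1}\geq 2$), and $\mathcal P_2$ with no unique minimum (so $n_{j+1}\geq 2$). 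Whenever such a $j$ is available, this gives $\text{sat}^*(n, K_{n_1,\dots,n_k}) \geq \text{sat}^*(n, \mathcal D_2) \geq \tfrac{n+1}{5}$ by Theorem~\ref{maintheorem}, and together with the upper bound yields $\Theta(n)$.

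The main obstacle is the edge cases in which no such $j$ exists, i.e.\ every layer of size two either sits at an extreme of the chain of layers or is adjacent to a layer of size one. These would be dispatched by invoking Proposition~5 of~\cite{gluing} directly and falling back on the already-known linear saturation of $\mathcal V$, $\Lambda$, the butterfly, and the antichain. For instance, if $n_1=2$ and $n_2 \geq 2$, then writing $K_{n_1,\dots,n_k} = K_{n_2,\dots,n_k} * K_2$ satisfies the hypotheses of Proposition~5 (the top factor has no unique minimum and $K_2$ has no unique maximum), giving $\text{sat}^*(n, K_{n_1,\dots,n_k}) \geq \text{sat}^*(n, \bullet * K_2) = \text{sat}^*(n, \Lambda) = \Omega(n)$; the symmetric case $n_k = 2$ is handled the same way via $\mathcal V$. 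The hardest residual configurations are those like $K_{1,2,1,2,1}$, in which no single split of the chain of layers satisfies the hypotheses of either Corollary or of Proposition~5; these would require iterated applications of Proposition~5 combined with the linear saturation of the butterfly, and the casework here is the delicate part of the argument.
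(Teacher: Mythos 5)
Your approach is the same as the paper's: upper bound from the cited $O(n)$ result in \cite{gluing}, lower bound by splitting $K_{n_1,\dots,n_k}$ at a size-2 layer and invoking the preceding corollary. But you have also (correctly) noticed that the paper's one-sentence justification, ``the above corollary implies\dots'', quietly assumes the size-2 layer $n_j$ is interior with $n_{j-1},n_{j+1}\geq 2$ -- the corollary needs both $\mathcal P_1=K_{n_1,\dots,n_{j-1}}$ and $\mathcal P_2=K_{n_{j+1},\dots,n_k}$ non-empty, the first with no unique maximum ($n_{j-1}\geq2$) and the second with no unique minimum ($n_{j+1}\geq 2$). You dispatch the boundary cases ($j=1$ or $j=k$ with an adjacent layer of size $\geq 2$) via Proposition~5 and the linearity of $\mathcal V$, $\Lambda$, which is fine. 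You also correctly identify that this leaves the cases where every size-2 layer has a size-1 neighbour, and these are genuinely problematic.

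The remaining gap is real, and your proposed remedy (``iterated applications of Proposition~5 combined with the linear saturation of the butterfly'') does not close it. The issue is that for posets like $K_{2,1,2}$ or $K_{1,2,1,2,1}$, \emph{no} cut between layers satisfies the hypotheses of Proposition~5 at all: at every cut, either the bottom part has a unique maximal element or the top part has a unique minimal element (this is exactly because every size-$\geq2$ layer there is sandwiched by size-1 layers). So iterating Proposition~5 never gets started on these posets, and the butterfly never appears. In short, your blind proposal reproduces the paper's argument and, by being more careful, exposes that the final corollary is not fully justified by the stated tools in these residual cases; neither the paper's terse remark nor your proposal resolves them.
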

\bibliographystyle{amsplain}
\bibliography{references}

\providecommand{\bysame}{\leavevmode\hbox to3em{\hrulefill}\thinspace}
\providecommand{\MR}{\relax\ifhmode\unskip\space\fi MR }
\providecommand{\MRhref}[2]{%
  \href{http://www.ams.org/mathscinet-getitem?mr=#1}{#2}
}
\providecommand{\href}[2]{#2}
\begin{thebibliography}{10}

\bibitem{polynomial}
Paul Bastide, Carla Groenland, Maria-Romina Ivan, and Tom Johnston, \emph{A {P}olynomial {U}pper {B}ound for {P}oset {S}aturation}, European Journal of Combinatorics (2024).

\bibitem{bastide2024exact}
Paul Bastide, Carla Groenland, Hugo Jacob, and Tom Johnston, \emph{Exact antichain saturation numbers via a generalisation of a result of {L}ehman-{R}on}, Combinatorial Theory \textbf{4} (2024).

\bibitem{ferrara2017saturation}
Michael Ferrara, Bill Kay, Lucas Kramer, Ryan~R Martin, Benjamin Reiniger, Heather~C Smith, and Eric Sullivan, \emph{The {S}aturation {N}umber of {I}nduced {S}ubposets of the {B}oolean lattice}, Discrete Mathematics \textbf{340} (2017), no.~10, 2479--2487.

\bibitem{freschi2023induced}
Andrea Freschi, Sim{\'o}n Piga, Maryam Sharifzadeh, and Andrew Treglown, \emph{The induced saturation problem for posets}, Combinatorial Theory \textbf{3} (2023), no.~3.

\bibitem{gerbner2013saturating}
D{\'a}niel Gerbner, Bal{\'a}zs Keszegh, Nathan Lemons, Cory Palmer, D{\"o}m{\"o}t{\"o}r P{\'a}lv{\"o}lgyi, and Bal{\'a}zs Patk{\'o}s, \emph{Saturating {S}perner {F}amilies}, Graphs and Combinatorics \textbf{29} (2013), no.~5, 1355--1364.

\bibitem{gerbner2018extremal}
D{\'a}niel Gerbner and Bal{\'a}zs Patk{\'o}s, \emph{Extremal {F}inite {S}et {T}heory}, CRC Press, 2018.

\bibitem{ivan2020saturationbutterflyposet}
Maria-Romina Ivan, \emph{Saturation for the {B}utterfly {P}oset}, Mathematika \textbf{66} (2020), no.~3, 806--817.

\bibitem{ivan2021minimal}
\bysame, \emph{Minimal {D}iamond-{S}aturated {F}amilies}, Contemporary Mathematics \textbf{3} (2022), no.~2, 81.

\bibitem{gluing}
Maria-Romina Ivan and Sean Jaffe, \emph{Gluing {P}osets and the {D}ichotomy of {P}oset {S}aturation {N}umbers}, arXiv:2503.12223 (2025).

\bibitem{keszegh2021induced}
Bal{\'a}zs Keszegh, Nathan Lemons, Ryan~R Martin, D{\"o}m{\"o}t{\"o}r P{\'a}lv{\"o}lgyi, and Bal{\'a}zs Patk{\'o}s, \emph{Induced and non-induced poset saturation problems}, Journal of Combinatorial Theory, Series A \textbf{184} (2021), 105497.

\bibitem{martin2020improved}
Ryan~R Martin, Heather~C Smith, and Shanise Walker, \emph{Improved {B}ounds for {I}nduced {P}oset {S}aturation}, The Electronic Journal of Combinatorics (2020), P2--31.

\end{thebibliography}
\Addresses
\end{document}